\newtheorem{theorem}{Theorem}[section]
\newtheorem{definition}[theorem]{Definition}
\newtheorem{assumption}[theorem]{Assumption}
\newtheorem{lemma}[theorem]{Lemma}
\newtheorem{remark}[theorem]{Remark}
\newcommand{\argmax}{\mathop{\rm arg\hspace{2pt}max}}
\newcommand{\vertiii}[1]{{\vert\kern-0.25ex\vert\kern-0.25ex\vert #1 
		\vert\kern-0.25ex\vert\kern-0.25ex\vert}}
\newcommand{\bigvertiii}[1]{{\big\vert\kern-0.35ex\big\vert\kern-0.35ex\big\vert #1 
		\big\vert\kern-0.35ex\big\vert\kern-0.35ex\big\vert}}
\begin{document}
	%
	\title{Self-triggered Stabilization of Discrete-time Linear
		Systems with Quantized State Measurements
	}
	\author{Masashi~Wakaiki,~\IEEEmembership{Member,~IEEE}
		\thanks{
			M.~Wakaiki is with the 
			Graduate School of System Informatics, Kobe University, Hyogo 657-8501, Japan.
			(email:{\tt  
				wakaiki@ruby.kobe-u.ac.jp).}}
		\thanks{
			This work was supported in part by JSPS KAKENHI Grant Number
			JP20K14362.}%
	}
	
	\maketitle

\begin{abstract}
We study the self-triggered stabilization of discrete-time 
linear systems with quantized state measurements.
In the networked control system we consider,
sensors may be spatially distributed and be connected to
a self-triggering mechanism through finite data-rate channels.
Each sensor independently encodes its measurements and
sends them to the self-triggering mechanism.
The self-triggering mechanism integrates 
quantized measurement data and then computes sampling times.
Assuming that the closed-loop system is stable in the absence 
of quantization and self-triggered sampling, we propose
a joint design method of an encoding scheme and 
a self-triggering mechanism  for stabilization.
To deal with data inaccuracy due to quantization,
the proposed self-triggering mechanism uses
not only quantized data but also an upper bound
of quantization errors, which is shared with a decoder. 
\end{abstract}

\begin{IEEEkeywords}
Networked control systems, quantized control, self-triggered control.
\end{IEEEkeywords}

\section{Introduction}
The subject of this note is self-triggered control
with quantized state measurements. Quantized control and self-triggered control 
have been extensively studied in the past few decades.
In both research areas, 
many methods have been developed for control
with limited information about plant measurements.
However, a synergy between quantized control and
self-triggered control has not been studied sufficiently.
It is our aim to combine these two research areas. 
In particular, we construct a self-triggering mechanism
	that determines sampling times for stabilization 
	from quantized measurements of
	possibly spatially distributed sensors.

Signal quantization is unavoidable for data transmission over
digital communication channels.
Coarse quantization may make feedback systems unstable.
Moreover, asymptotic convergence to equilibrium points cannot
be achieved by static finite-level quantizers in general.
Time-varying quantizers for stabilization with finite data rates
have been developed in \cite{Brockett2000,Liberzon2003}.
This class of time-varying quantizers
has been introduced for linear time-invariant systems and 
then has been extended to more general classes of systems such as 
nonlinear systems \cite{Liberzon2003Automatica, Liberzon2005}, 
switched linear systems \cite{Liberzon2014, Wakaiki2017TAC}, and 
systems under DoS attacks \cite{Wakaiki2020DoS}. 
Instability due to quantization errors
raises also a theoretical question of how coarse quantization is allowed
without compromising the closed-loop stability.
From this motivation,
data-rate limitation for stabilization has been extensively 
investigated; see the surveys \cite{Nair2007, Ishii2012}.

To reduce resource utilization,
techniques for aperiodic data transmission
have attracted considerable attention.
Event-triggered control \cite{Aarzen1999, Astrom2002} 
	and self-triggered control \cite{Velasco2003}
are the two major approaches of the aperiodic transmission techniques.
In both event-triggered control systems and 
self-triggered control systems,
the transmission of information occurs only when needed.
In event-triggered control systems,
triggering conditions are based on current measurements and
are monitored continuously or periodically.
Instead of such frequent monitoring, self-triggering mechanisms compute
the next transmission time when they receive measurements.
The advantage of self-triggered control systems
is that the sensors can be deactivated between transmission times.
Various triggering mechanisms, together with stability analysis, have been proposed; see, e.g.,
\cite{Tabuada2007,Heemels2008,Heemels2013} for the event-triggered case and
\cite{Wang2009, Anta2010,Mazo2010} for the self-triggered case.
Moreover, joint design methods of feedback control laws and 
triggering mechanisms have been developed 
in \cite{Ghodrat2020, Xue2021, Lu2020, Wan2021} and the references therein.

Quantized event-triggered  control has become an
active research topic in recent years; see, e.g., 
\cite{Garcia2013, Li2016ETC, Tanwani2016ETC ,Du2017,Liu2018ETC, Abdelrahim2019,Wang2021} 
and the references therein. However,
there has been relatively little work on 
quantized self-triggered  control.
A consensus protocol
with a quantized self-triggered communication policy
has been proposed for multi-agent systems 
in \cite{Persis2013,Matsume2021}, but
these systems differ significantly from the models we study. 
Sum-of-absolute-values optimization 
has been employed
for self-triggered control with discrete-valued inputs
in \cite{Ikeda2016}.
In \cite{Zhou2018STC}, self-triggered and event-triggered control with input
and output quantization has been studied. 
However, 
the self-triggering mechanisms proposed in \cite{Ikeda2016,Zhou2018STC}
need
the non-quantized measurements, which
would remove difficulties present in the computation of sampling times.
Many technical tools are commonly used for 
quantized control and self-triggered control.
This is because
analyzing implementation-induced errors plays a crucial role
in both research areas.
Hence coupling these two research areas is quite natural.

 \begin{figure}[!b]
	\centering
	\includegraphics[width = 6.5cm]{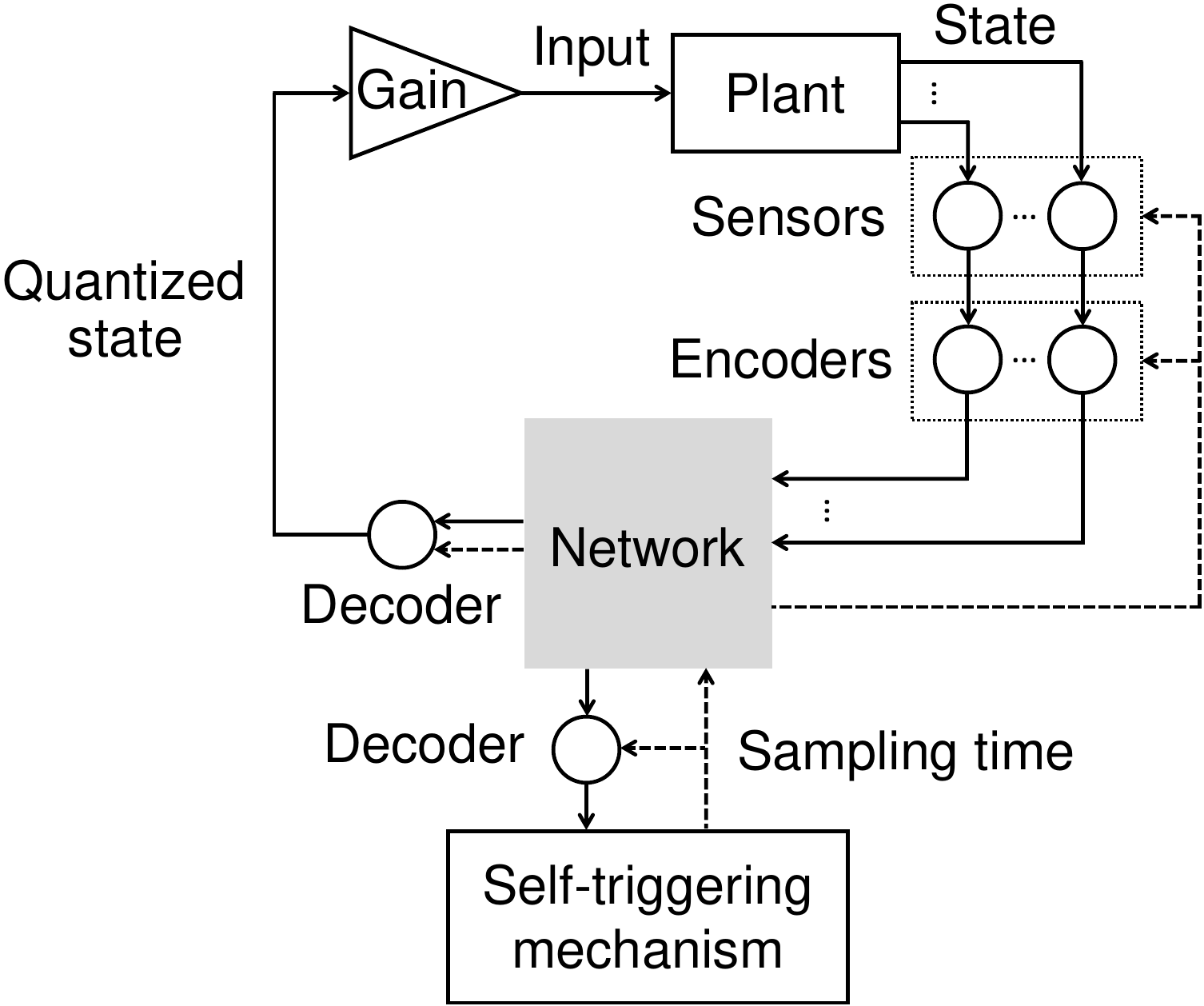}
	\caption{Networked control system. Since the encoding and decoding of
		sampling times is simple and  not essential in our discrete-time
		setting, we omit
		it in the figure.}
	\label{fig:closed_loop}
\end{figure}

In this note,
we consider the networked control system shown in Fig.~\ref{fig:closed_loop} and
assume 
that the  system
is stable when no quantization or self-triggering sampling is performed.
Our main contribution  is to develop a joint design
method of an encoding scheme and a self-triggering mechanism for stabilization.
The proposed encoding and self-triggering strategy has the
following advantageous
features:
\begin{itemize}
	\item
	The proposed self-triggering mechanism 
determines sampling times from the quantized state, unlike
the self-triggering mechanisms developed in \cite{Ikeda2016,Zhou2018STC} that 
use the original (non-quantized) state. Due to this property, we do not need to
install the self-triggering mechanism at the sensors. Therefore, the proposed encoding and self-triggering strategy 
is applicable to the scenario in which the sensors 
do not have computational resources enough to determine sampling times
by self-triggering mechanisms; see also \cite{Akashi2018}
for the computational issue of self-triggered control.

\item
In the proposed encoding scheme, an individual 
sensor encodes its measurement data 
without information from other sensors. In contrast, the existing 
scheme proposed in \cite{Zhou2018STC} has to collect 
measurement data from all sensors in one place. This issue 
does not arise in the previous study \cite{Ikeda2016} because
it considers only input quantization for single-input systems.
The distributed architecture 
allows the proposed encoding scheme to be applied to
systems with spatially distributed sensors.
\end{itemize}

	In contrast with the distributed architecture of the encoding scheme,
	the self-triggering mechanism 
	works in a centralized way, i.e., it integrates measurement data
	sent by all sensors in order to
	compute sampling times for stabilization.
	In this aspect,
the use of quantized measurements in the self-triggering mechanism
is also important when sensors are spatially distributed.
In fact, even when the self-triggering mechanism
is colocated with one sensor, it needs to 
receive 
measurement data from other distant sensors, which
is  done through digital channels in most cases.

The quantized self-triggered stabilization problem we study has 
two difficulties.
First, sampling times are computed 
only from inaccurate information on the plant state.
A key insight for solving this issue is that 
the self-triggering mechanism can share an upper bound of
quantization errors with the decoder.
To compensate for the inaccuracy of information on the state,
the proposed triggering mechanism exploits
not only the quantized state but also the upper bound of
quantization errors.
The second difficulty is that 
the self-triggered sampling makes the encoding and
decoding scheme aperiodic.
To deal with this aperiodicity,
we introduce in the analysis a new norm 
with respect to which the closed-loop matrix is 
a strict contraction.
The contraction property of the norm 
enables us to develop a simple update rule of the
encoding and decoding scheme, which
requires less computational resources in the encoders.

\color{black}
The remainder of this note is organized as follows.
In Section~\ref{sec:NCS}, the networked control system we consider
is introduced.
In Section~\ref{sec:stabilization}, we propose a joint design
method of an encoding scheme and a self-triggering mechanism for
stabilization.
We illustrate the proposed method with a numerical example in Section~\ref{sec:example}
and give concluding remarks in Section~\ref{sec:conclusion}.

\paragraph*{Notation}
The set of non-negative integers and
the set of non-negative real numbers are denoted by $\mathbb{N}_0$ and
$\mathbb{R}_+$, respectively.
Let $A^{\top}$ be
the transpose of a matrix $A \in \mathbb{R}^{m\times n}$.
Let $I_n$ denote the identity matrix of order $n$.
For a vector $v \in \mathbb{R}^{ n}$ with $i$th element $v_i$, 
its maximum norm is $\|v\|_{\infty} := 
\max\{|v_i |,\dots, |v_n|\}$.
The corresponding induced norm of a matrix 
$A \in \mathbb{R}^{ m\times n}$ 
with $(i,j)$th element $A_{ij}$
is given by
$\|A\|_{\infty} = \max\{\sum_{j=1}^n |A_{ij}| : 1 \leq i \leq m\} $.
We denote by $\varrho(P)$ 
the spectral radius of $P \in \mathbb{R}^{n\times n}$.
For a matrix sequence 
$\{A_k\}_{k \in \mathbb{N}_0} \subset \mathbb{R}^{ m\times n}$,
the empty sum $\sum_{k=0}^{-1} A_k$ is set to $0$.
 
\section{Networked control system}
\label{sec:NCS}
In this section, the control system we consider and 
	a basic encoding and decoding scheme are introduced. We also present
the  structure of the proposed self-triggering mechanism.
\subsection{Plant and controller}
Consider the discrete-time linear time-invariant system
\begin{equation}
\label{eq:system}
\left\{
\begin{alignedat}{4}
x(k+1) &= Ax(k) + Bu(k), & \qquad  &k \in \mathbb{N}_0,\\
u(k) &= Kq_\ell,& &k_\ell \leq k < k_{\ell+1},
\end{alignedat}
\right.
\end{equation}
where $x(k) \in \mathbb{R}^n$ and $u(k) \in \mathbb{R}^m$
are the state and the input of the plant at time $k \in \mathbb{N}_0$, 
respectively.
The time sequence $\{k_\ell\}_{\ell \in \mathbb{N}_0}$ with $k_0 :=0$ is
computed by a certain self-triggering mechanism, and 
$q_\ell$ is the quantized value of $x(k_\ell)$.

Define the closed-loop matrix $A_{\text{cl}}$ by
$A_{\text{cl}} := A+BK$.
We assume that the closed-loop system is stable
in the situation where the state $x(k)$ is transmitted 
without quantization at all times $k \in \mathbb{N}_0$.
\begin{assumption}
	\label{assump:stability}
	The feedback gain $K$ is chosen so that 
	the closed-loop matrix $A_{\text{cl}}$ is Schur stable, that is,
	there exist constants $\Gamma \geq 1$ and $\gamma \in (0,1)$ such that 
	\begin{equation}
	\label{eq:Acl_bound}
	\big\|A_{\text{cl}}^k
	\big\|_{\infty} \leq \Gamma \gamma^k\qquad \forall k \in \mathbb{N}_0.
	\end{equation}
	\vspace{-9pt}
\end{assumption}

We also place an assumption that a bound of 
the initial state $x(0)$ is known. 
One can obtain an initial state bound from the
standard zooming-out procedure developed in \cite{Liberzon2003},
where quantized signals are assumed to be transmitted at every time.
\begin{assumption}
	\label{assump:initial_bound}
	A constant $E_0>0$ satisfying $\|x(0)\|_{\infty} \leq E_0$ is known.
\end{assumption}

In this note, we study the following notion of
the closed-loop stability.
\begin{definition}
	The discrete-time system \eqref{eq:system} achieves 
	{\em exponential
		convergence} under 
	Assumption~\ref{assump:initial_bound} 
	if there exist constants $\Omega \geq 1$ and $\omega \in (0,1)$,
	independent of $E_0$,
	such that 
	\begin{equation}
	\label{eq:exp_conv}
	\|x(k)\|_{\infty} \leq \Omega E_0 \omega^k \qquad \forall k \in \mathbb{N}_0
	\end{equation}
	for every initial state $x(0) \in \mathbb{R}^n$ satisfying 
	$\|x(0)\|_{\infty} \leq E_0$.
\end{definition}

\begin{remark}
	Consider the continuous-time linear time-invariant system
	\begin{equation}
	\label{eq:conk_system}
	\dot x_{\text{c}}(t) = A_{\text{c}}x_{\text{c}}(t) + B_{\text{c}}u_{\text{c}}(t),  \quad  t \geq 0,
	\end{equation}
	where $x_{\text{c}}(t) \in \mathbb{R}^n$ and $u_{\text{c}}(t) \in \mathbb{R}^m$
	are the state and the input of the plant at time $t \geq 0$, 
	respectively. A standard self-triggered mechanism is given by
	\[
	t_{\ell+1} := t_\ell + \inf \{\tau > 0
	: f(x(t_\ell),\tau) >0  \},\quad \ell \in \mathbb{N}_0
	\]
	for some function $f:\mathbb{R}^n\times \mathbb{R}_+ \to \mathbb{R}$.
	However, this mechanism has two implementation issues.
	First, the next sampling time $t_{\ell+1}$ (or 
	the inter-sampling time $t_{\ell+1} - t_\ell$) needs to be quantized when
	it is sent to the sensors over finite data-rate channels.
	Second, the triggering mechanism 
	has to check the condition $f(x(k_\ell),\tau) >0$
	continuously with respect to $\tau > 0$.
	An easy way to circumvent these issues is to 
	place a time-triggering condition $\{t_\ell\}_{\ell \in \mathbb{N}_0} \subset \{ 
	\ell h
	\}_{\ell \in \mathbb{N}_0}$ for some $h>0$ 
	as in 
	the self-triggering mechanism proposed in \cite{Mazo2010} and 
	the periodic event-triggering mechanism (see, e.g., 
	\cite{Heemels2013}).
	When the continuous-time system \eqref{eq:conk_system} 
	is discretized with period $h$ under this 
	time-triggering condition, the resulting
	discrete-time system is in the form \eqref{eq:system}, where
	the matrices $A$ and $B$ are given by
	\[
	A = e^{A_{\text{c}} h},\quad B = \int^h_0 e^{A_{\text{c}} \tau} d\tau B_{\text{c}}
	\]
	and the state $x(k)$ and the input $u(k)$ are 
	$x(k) = x_{\text{c}}(k h)$ and $u(k) = u_{\text{c}}(kh)$ for $k \in \mathbb{N}_0$.
\end{remark}

\subsection{Basic encoding and decoding scheme}
\label{sec:EDS}
Let $\ell \in \mathbb{N}_0$, and
assume that 
we have obtained $E_\ell > 0$ satisfying
$\|x(k_\ell)\|_{\infty} \leq E_\ell$ at
the $\ell$th sampling time $k=k_\ell$.
In the next section,
we will explain how to obtain such a bound  $E_\ell$; see
\eqref{eq:E_def} and  Lemma~\ref{lem:state_bound} below for 
details.

Let 
$\eta \in \mathbb{N}$ be the number of sensors, and let
$n_1,\dots,n_{\eta} \in \mathbb{N}$ satisfy
$n =  n_1 + \cdots + n_\eta$.
We partition the state $x(k_\ell)$  into
\[
x(k_\ell) = \begin{bmatrix}
x^{\langle 1 \rangle}(k_\ell) \\ \vdots \\  x^{\langle \eta \rangle}(k_\ell)
\end{bmatrix},
\]
where $x^{\langle i \rangle}(k_\ell) \in \mathbb{R}^{n_i}$ 
is measured by the $i$th sensor
for $i =1,\dots,\eta$.
By assumption, $x^{\langle i \rangle}(k_\ell)$ satisfies $\|x^{\langle i \rangle}(k_\ell)\|_{\infty} \leq E_\ell$.
Let $N \in \mathbb{N}$ be the number of
quantization levels per dimension.
The $i$th encoder divides the hypercube
\[
\Big\{ 
x^{\langle i \rangle} \in \mathbb{R}^{n_i} : 
\big\|x^{\langle i \rangle} \big\|_{\infty} \leq E_\ell
\Big\}
\]
into $N^{n_i}$ equal hypercubes. Indices $\{1,\dots, N^{n_i}\}$
are assigned to divided hypercubes by a certain one-to-one
mapping. The $i$th encoder 
sends the index of the divided hypercube 
containing $x^{\langle i \rangle}(k_\ell)$ to 
the decoders at 
the self-triggering mechanism and the feedback gain.
If $x^{\langle i \rangle}(k_\ell)$ lies on the boundary of 
several hypercubes, then either one of these hypercubes can be chosen.
The decoders 
calculate the value of the center of the hybercube corresponding
the received index, and 
the quantized value $q_\ell^{\langle i \rangle}$ 
of $x^{\langle i \rangle}(k_\ell)$ is set to this value.
By construction, we obtain
\begin{equation}
\label{eq:i_qe}
\big\|q_\ell^{\langle i \rangle} - x^{\langle i \rangle}(k_\ell)\big\|_{\infty} \leq 
\frac{E_\ell}{N}.
\end{equation}
Define 
\[
q_\ell := \begin{bmatrix}
q_\ell^{\langle 1 \rangle} \\ \vdots \\ q_\ell^{\langle \eta \rangle}
\end{bmatrix} \in \mathbb{R}^n.
\]
Then \eqref{eq:i_qe} yields
\begin{equation}
\label{eq:quantization_error}
\| q_\ell - x(k_\ell) \|_{\infty} \leq 
\frac{E_\ell}{N}.
\end{equation}

\subsection{Structure of self-triggering mechanism}
\label{sec:STM}
The sampling times $\{k_\ell\}_{\ell \in \mathbb{N}_0}$
is generated by a self-triggering mechanism of the form
\begin{equation}
\label{eq:STM}
\left\{
\begin{aligned}
k_{\ell+1} &:= k_\ell + \min\{\tau_{\max},~\tau_\ell \},\quad 
k_0 := 0, \\
\tau_{\ell} &:= \min \{ \tau \in \mathbb{N}: g(q_\ell , E_\ell , \tau) > \sigma E_\ell \},
\quad \ell \in \mathbb{N}_0,
\end{aligned}
\right.
\end{equation}
where $\sigma >0$ is a threshold parameter, $\tau_{\max} \in \mathbb{N}$ is 
an upper bound of inter-sampling times $k_{\ell+1} - k_\ell$, that is,
$k_{\ell+1} - k_\ell \leq \tau_{\max}$ for every $\ell \in \mathbb{N}_0$, and
$g :\mathbb{R}^{n} \times \mathbb{R}_+ \times \mathbb{N}_0 \to \mathbb{R}_+$
is a certain function. The details of $g$ will be given in the next section; see
\eqref{eq:g_def} below.
The self-triggering mechanism \eqref{eq:STM} determines
the next sampling time $k_{\ell+1}$ from the quantized state $q_{\ell}$ 
and the state bound $E_{\ell}$ without using the original state $x(k_{\ell})$.
Therefore, it does not need to be installed at the sensors.
Note that the self-triggering mechanism 
knows from the state bound $E_{\ell}$ 
that the quantization error does not exceed 
$E_{\ell}/N$ by \eqref{eq:quantization_error}.

The inter-sampling time
$\min\{\tau_{\max},~\!\tau_\ell \}$ is transmitted to
the sensors, and the sensors measure the state 
at $k_{\ell+1} = k_\ell + \min\{\tau_{\max},~\!\tau_\ell \}$.
Setting the upper bound $\tau_{\max}$ allows
the self-triggering mechanism to inform the sensors about 
the next sampling instant with a finite data rate.
Since inter-sampling times
can be transmitted by a  simple encoding and decoding scheme,
we omit the details.

	In contrast to the distributed encoding scheme described in  Section~\ref{sec:EDS}, the sampling times 
	$\{k_{\ell}\}_{\ell \in \mathbb{N}_0}$
	are computed in a centralized manner, that is,
	the quantized data from all the sensors are collected 
	in the self-triggering mechanism \eqref{eq:STM} 
	for the computation of $\{k_{\ell}\}_{\ell \in \mathbb{N}_0}$. 
	Individual sensors cannot determine the next sampling time
	by themselves due to the lack of information on other measurement data
	(and also of computational resources in some cases).
	To compute sampling times for stabilization,
	the centralized self-triggering mechanism \eqref{eq:STM}
	integrates measurement data.

\section{Quantized self-triggered stabilization}
\label{sec:stabilization}
The encoding and self-triggering strategy presented in
Sections~\ref{sec:EDS} and \ref{sec:STM} is completely determined
if the following two components are given:
\begin{itemize}
	\item the sequence $\{E_{\ell}\}_{\ell \in \mathbb{N}_0}$ of 
	state bounds for the encoding and decoding scheme;
	\item the function $g$ in  the self-triggering mechanism \eqref{eq:STM}.
\end{itemize}
In this section, we first construct the function $g$, after analyzing 
errors due to quantization and self-triggered sampling.
Next, we design the sequence 
$\{E_{\ell}\}_{\ell \in \mathbb{N}_0}$ of state bounds
under sampling times computed by the self-triggering mechanism
\eqref{eq:STM} with this function $g$.
After these preparations, we provide a sufficient condition for
the quantized self-triggered control system to achieve exponential convergence.
Finally, we summarize 
the proposed 
joint design of an encoding scheme and a self-triggering mechanism
for stabilization.

\subsection{Error analysis for self-triggered sampling}
We construct the function $g$ in  the 
self-triggering mechanism \eqref{eq:STM}
so that the input error $\|Kq_\ell - Kx(k)\|_{\infty}$
satisfies
\begin{equation}
\label{eq:input_error}
\|Kq_\ell - Kx(k)\|_{\infty} \leq \sigma E_{\ell}
\end{equation}
for all $k_\ell + 1 \leq k < k_{\ell+1}$ and $\ell \in \mathbb{N}_0$.
To this end,
we first obtain an upper bound of the input error.
\begin{lemma}
	\label{lem:inpuk_error}
	{\em
	Let 
	$\ell \in \mathbb{N}_0$ and suppose that 
	the system \eqref{eq:system} with the encoding 
	and decoding scheme described in Section~\ref{sec:EDS}
	satisfies
	$\|x(k_\ell)\|_{\infty} \leq  E_\ell$ for some $E_{\ell} >0$.
	Then the quantized state $q_\ell$ satisfies
	\begin{align}
	\|Kq_\ell - Kx(k)\|_{\infty}
	&\leq 
	\left\|K \hspace{-1.3pt}\left(I_n- A^{k-k_\ell} - \sum_{\tau=0}^{k-k_\ell-1} A^\tau BK \right)\hspace{-1.3pt}q_\ell\right\|_{\infty} \notag \\
	&\qquad + \big\|KA^{k-k_\ell}\big\|_{\infty} \frac{E_\ell}{N} 
	\label{eq:inpuk_error_bound}
	\end{align}
	for all $k_\ell \leq k < k_{\ell+1}$.
}
\end{lemma}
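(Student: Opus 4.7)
The plan is to derive the stated bound by a direct propagation argument: during the inter-sampling interval $[k_\ell, k_{\ell+1})$ the input is held constant at $u(k)=Kq_\ell$, so the closed-form solution of \eqref{eq:system} allows an explicit algebraic expression for $Kq_\ell - Kx(k)$ that isolates the part computable from $q_\ell$ alone from the part involving the unknown quantization error $q_\ell - x(k_\ell)$.

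First, I would iterate \eqref{eq:system} from time $k_\ell$ with the constant input $Kq_\ell$ to obtain
\begin{equation*}
x(k) = A^{k-k_\ell} x(k_\ell) + \left(\sum_{\tau=0}^{k-k_\ell-1} A^\tau \right) BKq_\ell
\end{equation*}
for every $k_\ell \leq k < k_{\ell+1}$ (the case $k=k_\ell$ is trivial since the summation is empty by the notational convention stated in the paper). Premultiplying by $K$ and subtracting from $Kq_\ell$ gives
\begin{equation*}
Kq_\ell - Kx(k) = K\left(I - \sum_{\tau=0}^{k-k_\ell-1} A^\tau BK\right) q_\ell - KA^{k-k_\ell} x(k_\ell).
\end{equation*}

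Next, I would use the key splitting $x(k_\ell) = q_\ell - (q_\ell - x(k_\ell))$, which separates the quantity $q_\ell$ known at the self-triggering mechanism from the quantization error. Substituting this identity and regrouping yields
\begin{equation*}
Kq_\ell - Kx(k) = K\left(I - A^{k-k_\ell} - \sum_{\tau=0}^{k-k_\ell-1} A^\tau BK\right) q_\ell + KA^{k-k_\ell}\bigl(q_\ell - x(k_\ell)\bigr).
\end{equation*}
Applying the triangle inequality in the maximum norm, together with submultiplicativity of $\|\cdot\|_\infty$ for the second summand and the quantization error bound \eqref{eq:quantization_error}, namely $\|q_\ell - x(k_\ell)\|_\infty \leq E_\ell/N$, produces exactly \eqref{eq:inpuk_error_bound}.

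There is no substantive obstacle in this proof; the entire content lies in choosing the decomposition so that the computable part of the bound depends only on $q_\ell$ (and hence can later be evaluated inside the function $g$ defining the self-triggering rule \eqref{eq:STM}), while the remaining part is a deterministic scaling of the worst-case quantization error $E_\ell/N$. The hypothesis $\|x(k_\ell)\|_\infty \leq E_\ell$ is used only implicitly through \eqref{eq:quantization_error}, since the encoding and decoding scheme of Section~\ref{sec:EDS} relies on this bound to guarantee $\|q_\ell - x(k_\ell)\|_\infty \leq E_\ell/N$.
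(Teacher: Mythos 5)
Your proposal is correct and follows essentially the same route as the paper's proof: iterate the dynamics over the inter-sampling interval with the held input $Kq_\ell$, split $x(k_\ell) = q_\ell - (q_\ell - x(k_\ell))$ to isolate the term computable from $q_\ell$ from the quantization-error term, and finish with the triangle inequality and the bound \eqref{eq:quantization_error}. The only cosmetic difference is that you premultiply by $K$ before performing the splitting, whereas the paper first derives the identity for $q_\ell - x(k)$ and then applies $K$; the resulting decomposition and estimate are identical.
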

\begin{proof}
	Let 
	$\ell \in \mathbb{N}_0$ and $k_{\ell} \leq k < k_{\ell+1}$.
	Since
	\begin{align*}
	x(k)&=A^{k-k_\ell} x(k_\ell) + 
	\sum_{\tau=0}^{k-k_\ell-1} A^{\tau} BKq_\ell \\
	&= \left( A^{k-k_\ell} + 
	\sum_{\tau=0}^{k-k_\ell-1} A^{\tau} BK\right)q_\ell - 
	A^{k-k_\ell} \big(q_{\ell} - x(k_{\ell}) \big),
	\end{align*}
	it follows that 
	\begin{align*}
	q_\ell - x(k) &= 
	\left( I_n- A^{k-k_\ell} - \sum_{\tau=0}^{k-k_\ell-1} A^{\tau} BK \right) q_\ell \\
	&\qquad + 
	A^{k-k_\ell} \big( q_{\ell} - x(k_{\ell})\big).
	\end{align*}
	Thus, 
	the inequality
	\eqref{eq:inpuk_error_bound} follows from \eqref{eq:quantization_error}.
\end{proof}

We define 
the function $g$
in the self-triggering mechanism \eqref{eq:STM}
by 
\begin{equation}
\label{eq:g_def}
g(q,E,\tau) := 
\left\|K\hspace{-1.3pt} \left(I_n- A^\tau - \sum_{p=0}^{\tau-1} A^pBK \right)\hspace{-1.3pt}q\right\|_{\infty} +
\big\|KA^{\tau}\big\|_{\infty} \frac{E}{N}
\end{equation}
for $q \in \mathbb{R}^n$, $E \geq 0$, and $\tau \in \mathbb{N}_0$.
Lemma~\ref{lem:inpuk_error} shows that 
if $E_{\ell} >0$ satisfies $\|x(k_\ell)\|_{\infty} \leq  E_\ell$, then
\[
\|Kq_\ell - Kx(k)\|_{\infty} \leq g(q_{\ell},E_{\ell},k-k_{\ell})
\] 
for all $k_\ell \leq k < k_{\ell+1}$.
Combining this and the triggering condition given in \eqref{eq:STM},
we obtain the desired inequality \eqref{eq:input_error} for all $k_\ell + 1 \leq k < k_{\ell+1}$ and $\ell \in \mathbb{N}_0$.

\subsection{Generating state bounds for encoding-decoding scheme}
To complete the design of the encoding and decoding scheme
described in Section~\ref{sec:EDS},
we next construct a sequence
$\{E_\ell\}_{\ell \in \mathbb{N}_0}$ 
satisfying $\|x(k_{\ell})\|_{\infty} \leq E_{\ell}$ for all $\ell \in \mathbb{N}_0$.
Note that the sampling times $\{k_{\ell}\}_{k \in \mathbb{N}_0}$ are 
computed by the self-triggering mechanism \eqref{eq:STM} with
 the function $g$ in \eqref{eq:g_def}.

Using the
constants $\Gamma\geq 1$ and $\gamma \in (0,1)$ satisfying \eqref{eq:Acl_bound},
we define 
$\{E_\ell\}_{\ell \in \mathbb{N}_0}$
by
\begin{equation}
\left\{
\begin{alignedat}{4}
\label{eq:E_def}
E_{\ell} &:=  \widetilde E_\ell,\quad \ell \in \mathbb{N}, \\
\widetilde E_0 &:= \Gamma E_0, \\
\widetilde E_{\ell+1} &:= \big(\gamma^{k_{\ell+1}-k_\ell} (1- \delta \sigma
) + \delta \sigma \big)
\widetilde E_\ell,\quad \ell \in \mathbb{N}_0,
\end{alignedat}
\right.
\end{equation}
where 
\[
\delta := \frac{\Gamma \|B\|_{\infty} }{1-\gamma}.
\]
In the periodic sampling case such as \cite{Liberzon2003,Liberzon2005,Liberzon2014,Wakaiki2020DoS},
the decay rate of $\{E_{\ell}\}_{\ell \in \mathbb{N}_0}$ depends on 
the number $N$ of quantization levels.
However, 
the update rule \eqref{eq:E_def} 
uses only the threshold parameter $\sigma$ and 
the inter-sampling time $k_{\ell+1} - k_{\ell}$.
The self-triggering mechanism exploits the advantage of
small quantization errors for reducing the number of 
data transmissions.
Consequently, the number $N$ of quantization levels  does not directly 
affect
the decay rate of $\{E_{\ell}\}_{\ell \in \mathbb{N}_0}$.

The following result provides a simple condition for the hypercube $\{x \in \mathbb{R}^n:
\|x\|_{\infty} \leq E_{\ell} \}$ to contain the state $x(k_{\ell})$.
\begin{lemma}
	\label{lem:state_bound}
	{\em
	Suppose that  Assumption~\ref{assump:initial_bound} hold.
	Let 
	the time sequence $\{k_\ell \}_{\ell \in \mathbb{N}_0}$ be
	as in \eqref{eq:STM}, where the function $g$ is defined by
	\eqref{eq:g_def}.
	Take
	a number $N \in \mathbb{N}$ of 
	quantization levels  and
	a thereshold parameter $\sigma>0$ such that
	\begin{equation}
	\label{eq:sigN_cond}
	\frac{\|K\|_{\infty}}{N} \leq \sigma.
	\end{equation}
	Then the state $x$ of the system \eqref{eq:system} satisfies
	\begin{equation}
	\label{eq:x_bound}
	\|x(k_\ell)\|_{\infty} \leq E_\ell\qquad \forall \ell \in \mathbb{N}_0,
	\end{equation}
	where the sequence $\{E_\ell\}_{\ell \in \mathbb{N}_0}$ 
	is defined by \eqref{eq:E_def}.
}
\end{lemma}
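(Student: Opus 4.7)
The plan is to argue by induction on $\ell$, but carried out with respect to a norm chosen so that $A_{\text{cl}}$ contracts cleanly by $\gamma$, with no $\Gamma$ overhead. The base case $\ell=0$ is immediate from Assumption~\ref{assump:initial_bound} since $k_0=0$. For the inductive step, assuming $\|x(k_\ell)\|_{\infty}\leq E_\ell$, I would first extract a uniform input-error bound: Lemma~\ref{lem:inpuk_error} together with the triggering condition in \eqref{eq:STM} gives $\|Kq_\ell-Kx(k)\|_{\infty}\leq g(q_\ell,E_\ell,k-k_\ell)\leq \sigma E_\ell$ for each $k$ with $k_\ell<k<k_{\ell+1}$, while at the initial index $k=k_\ell$ (where $g$ reduces to $\|K\|_{\infty}E_\ell/N$) the same bound follows from \eqref{eq:sigN_cond}. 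Rewriting the plant as $x(k+1)=A_{\text{cl}}x(k)+B(Kq_\ell-Kx(k))$ and iterating for $\tau:=k_{\ell+1}-k_\ell$ steps then yields the closed-loop identity
\[
x(k_{\ell+1}) \;=\; A_{\text{cl}}^\tau\, x(k_\ell) \;+\; \sum_{p=0}^{\tau-1} A_{\text{cl}}^{\tau-1-p}\, B\bigl(Kq_\ell-Kx(k_\ell+p)\bigr).
\]

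The main obstacle is that if one simply takes $\|\cdot\|_\infty$ of this identity and applies \eqref{eq:Acl_bound}, the leading term is $\Gamma\gamma^\tau\|x(k_\ell)\|_\infty$, whose coefficient $\Gamma\gamma^\tau$ is strictly larger than the coefficient $\gamma^\tau$ appearing in the target update rule \eqref{eq:E_def}. To absorb this $\Gamma$, I would introduce the adapted norm
\[
\vertiii{v} \;:=\; \sup_{k\in\mathbb{N}_0}\gamma^{-k}\|A_{\text{cl}}^k v\|_\infty,\qquad v\in\mathbb{R}^n,
\]
which, using \eqref{eq:Acl_bound}, is equivalent to $\|\cdot\|_\infty$ via $\|v\|_\infty\leq \vertiii{v}\leq \Gamma\|v\|_\infty$ and for which $A_{\text{cl}}$ becomes a strict contraction, $\vertiii{A_{\text{cl}}^j v}\leq \gamma^j \vertiii{v}$ for all $j\in\mathbb{N}_0$. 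The contraction rate now matches the update rule exactly.

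I would then run the induction on the strengthened invariant $\vertiii{x(k_\ell)}\leq \widetilde E_\ell$. The base case holds since $\vertiii{x(0)}\leq \Gamma E_0=\widetilde E_0$, and for $\ell\geq 1$ it implies $\|x(k_\ell)\|_\infty\leq \widetilde E_\ell=E_\ell$, which is what is needed for the next application of Lemma~\ref{lem:inpuk_error}. Applying $\vertiii{\cdot}$ to the displayed identity, using $\vertiii{Be}\leq \Gamma\|B\|_\infty\|e\|_\infty$, summing $\sum_{p=0}^{\tau-1}\gamma^{\tau-1-p}=(1-\gamma^\tau)/(1-\gamma)$, and invoking the input-error bound produces
\[
\vertiii{x(k_{\ell+1})} \;\leq\; \gamma^\tau \vertiii{x(k_\ell)} \;+\; \delta\sigma(1-\gamma^\tau)\, E_\ell,
\]
which for $\ell\geq 1$ equals $\widetilde E_{\ell+1}$ (since $E_\ell=\widetilde E_\ell$) and for $\ell=0$ is bounded above by $\widetilde E_1$ (since $E_0\leq \widetilde E_0$). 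Translating back via $\|x(k_\ell)\|_\infty\leq \vertiii{x(k_\ell)}$ for $\ell\geq 1$, with $\ell=0$ handled directly by Assumption~\ref{assump:initial_bound}, yields \eqref{eq:x_bound} and closes the induction.
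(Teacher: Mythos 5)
Your proposal is correct and follows essentially the same route as the paper's proof: the same adapted norm $\sup_{k}\gamma^{-k}\|A_{\text{cl}}^{k}\,\cdot\,\|_{\infty}$ (the paper's $\|\cdot\|_{\text{cl}}$ from Lemma~\ref{lem:contraction}), the same closed-loop decomposition of $x(k_{\ell+1})$, the same use of \eqref{eq:sigN_cond} to cover the $\tau=0$ case of the triggering condition, and the same strengthened invariant $\vertiii{x(k_\ell)}\leq\widetilde E_\ell$ with the base case absorbed via $\widetilde E_0=\Gamma E_0$.
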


To prove this lemma,
we use a norm $\|\cdot\|_{\text{cl}}$ with respect to which 
the closed-loop matrix $A_{\text{cl}}$ is a strict contraction, i.e.,
$\|A_{\text{cl}} \xi\|_{\text{cl}} < \|\xi\|_{\text{cl}} $ 
for all nonzero $\xi \in \mathbb{R}^n$, under Assumption~\ref{assump:stability}.
Such a norm was constructed for
infinite-dimensional systems in \cite[Lemma~II.1.5]{Eisner2010} and \cite{Wakaiki2018_EVC} without detailed proof.
We state the finite-dimensional version in the following lemma
and include the proof in Appendix for completeness.
\begin{lemma}
	\label{lem:contraction}
	{\em 
	Let $F \in \mathbb{R}^{n \times n}$, 
	$\Gamma\geq 1$, and $\gamma >0$ satisfy
	\[
	\big\| F^k\big\|_{\infty} \leq \Gamma \gamma^k\qquad \forall k \in \mathbb{N}_0.
	\]
	Then
	the function
	\begin{align*}
	\vertiii {\cdot }&: \mathbb{R}^n \to \mathbb{R} \\
	&: \xi \mapsto 	\vertiii {\xi } := \sup_{k \in \mathbb{N}_0}
	\big\|\gamma^{-k} F^k \xi\big\|_{\infty}
	\end{align*}
	is a norm on $\mathbb{R}^n$.
	Moreover, the norm $\vertiii {\cdot }$ satisfies
	\begin{equation}
	\label{eq:prop1}
	\|\xi\|_{\infty}\leq \vertiii {\xi } \leq \Gamma \|\xi\|_{\infty}	\qquad \forall \xi \in \mathbb{R}^n
	\end{equation}
	and
	\begin{equation}
	\label{eq:prop2}
	\bigvertiii{F^k \xi } \leq \gamma^k\vertiii {\xi } \qquad \forall \xi \in \mathbb{R}^n,~\forall k \in \mathbb{N}_0.
	\end{equation}
	\vspace{-9pt}
}
\end{lemma}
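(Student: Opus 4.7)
The plan is to prove the four claims in order: $\vertiii{\cdot}$ is finite-valued, it satisfies the sandwich bound \eqref{eq:prop1}, it is a norm, and it contracts under $F^k$ by factor $\gamma^k$. Finiteness and the upper bound in \eqref{eq:prop1} follow from the hypothesis by a one-line estimate: for any $\xi \in \mathbb{R}^n$ and any $k \in \mathbb{N}_0$, submultiplicativity of the induced $\infty$-norm gives $\|\gamma^{-k} F^k \xi\|_{\infty} \leq \gamma^{-k}\|F^k\|_{\infty} \|\xi\|_{\infty} \leq \Gamma \|\xi\|_{\infty}$, so taking the supremum in $k$ delivers $\vertiii{\xi} \leq \Gamma \|\xi\|_{\infty} < \infty$. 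The lower bound $\|\xi\|_{\infty} \leq \vertiii{\xi}$ comes directly from the single index $k=0$, for which the summand equals $\|\xi\|_{\infty}$.

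For the norm axioms, positive homogeneity is immediate from linearity of each $F^k$ and absolute homogeneity of $\|\cdot\|_{\infty}$. Subadditivity reduces to a termwise inequality that is preserved by the supremum: for each $k$ one has $\|\gamma^{-k}F^k(\xi+\eta)\|_{\infty} \leq \|\gamma^{-k}F^k\xi\|_{\infty} + \|\gamma^{-k}F^k\eta\|_{\infty} \leq \vertiii{\xi} + \vertiii{\eta}$, after which taking $\sup_k$ on the left yields the triangle inequality. Positive definiteness is inherited from $\|\cdot\|_{\infty}$ via the already established lower bound in \eqref{eq:prop1}, since $\vertiii{\xi}=0$ forces $\|\xi\|_{\infty}=0$ and hence $\xi=0$.

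The only substantive step is \eqref{eq:prop2}, which I would obtain by an index shift in the defining supremum. Fixing $k \in \mathbb{N}_0$ and using $F^j F^k = F^{j+k}$, I write
\begin{equation*}
\vertiii{F^k \xi} \,=\, \sup_{j \in \mathbb{N}_0} \|\gamma^{-j} F^{j+k}\xi\|_{\infty} \,=\, \gamma^k \sup_{j \in \mathbb{N}_0} \|\gamma^{-(j+k)} F^{j+k}\xi\|_{\infty},
\end{equation*}
and since $\{j+k : j \in \mathbb{N}_0\} = \{k,k+1,\dots\} \subseteq \mathbb{N}_0$, the final supremum is bounded above by $\sup_{m \in \mathbb{N}_0} \|\gamma^{-m} F^m \xi\|_{\infty} = \vertiii{\xi}$. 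No step is technically hard; the conceptual point is simply that $\vertiii{\cdot}$ is engineered by taking the supremum over a full forward orbit under $F$, which makes it shift-invariant up to the factor $\gamma^k$. The only place requiring any care is checking that the shifted index set is a subset of $\mathbb{N}_0$, which is what makes the inequality in \eqref{eq:prop2} go in the correct direction.
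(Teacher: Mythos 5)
Your proposal is correct and follows essentially the same route as the paper's own proof in the Appendix: finiteness and the upper bound via $\|\gamma^{-k}F^k\xi\|_{\infty}\leq\Gamma\|\xi\|_{\infty}$, the lower bound from the $k=0$ term, the standard norm axioms by termwise estimates under the supremum, and \eqref{eq:prop2} by the index shift $j\mapsto j+k$ with the shifted index set contained in $\mathbb{N}_0$. Your explicit remark about the direction of the inequality coming from that inclusion is a nice touch the paper leaves implicit, but the argument is the same.
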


Under Assumption~\ref{assump:stability}, there exist constants
 $\Gamma \geq 1$ and 
$\gamma \in (0,1)$ such that $\|A_{\text{cl}}^k
\|_{\infty} \leq \Gamma \gamma^k$ for  all $k \in \mathbb{N}_0$.
Using the constants $\Gamma$ and 
$\gamma$, 
we define a new norm on
$\mathbb{R}^n$ by
\begin{equation}
\label{eq:cl_norm_def}
\|\xi\|_{\text{cl}} :=  \sup_{k \in \mathbb{N}_0}
\big\|\gamma^{-k} A_{\text{cl}}^k \xi\big\|_{\infty},\quad \xi \in \mathbb{R}^n.
\end{equation}
Lemma~\ref{lem:contraction} 
shows that the matrix $A_{\text{cl}}$
is a strict contraction with respect to the norm $\|\cdot \|_{\text{cl}}$.

\color{black}
\noindent\hspace{1em}{\itshape {\it Proof of Lemma~\ref{lem:state_bound}:} }
	By Lemma~\ref{lem:contraction},
the norm $\|\cdot\|_{\text{cl}} $ defined as in \eqref{eq:cl_norm_def}
satisfies
\begin{equation}
\label{eq:Acl_norm_prop1}
\|\xi \|_{\infty}  \leq \|\xi \|_{\text{cl}} \leq \Gamma \|\xi \|_{\infty} 
\qquad \forall \xi \in \mathbb{R}^n
\end{equation}
and
\begin{equation}
\label{eq:Acl_norm_prop2}
\big\|A_{\text{cl}}^k\xi \big\|_{\text{cl}} \leq \gamma^k\|\xi \|_{\text{cl}} 
\qquad \forall \xi  \in \mathbb{R}^n,~\forall k \in \mathbb{N}_0.
\end{equation}
By the property \eqref{eq:Acl_norm_prop1}, we obtain
the desired inequality \eqref{eq:x_bound} if
\begin{equation}
\label{eq:xR_bound}
\|x(k_{\ell})\|_{\text{cl}} \leq \widetilde E_\ell\qquad \forall 
\ell \in \mathbb{N}_0.
\end{equation}
Using the property~\eqref{eq:Acl_norm_prop1} and
Assumption~\ref{assump:initial_bound}, we obtain
\[
\|x(0)\|_{\text{cl}} \leq \Gamma \|x(0)\|_{\infty} \leq \Gamma E_0 =
\widetilde E_0.
\]
Hence
\eqref{eq:xR_bound} is
true for $\ell=0$.
We now proceed by induction and assume 
\eqref{eq:xR_bound} to be true for some 
$\ell \in \mathbb{N}_0$. Define 
$e(k) := q_\ell - x(k)$ for $k_\ell \leq k < k_{\ell+1}$, and set
$p_{\ell} := k_{\ell+1}-k_\ell$.
Then
\begin{align}
\label{eq:state_dynamics_kell1}
x(k_{\ell+1}) = A_{\text{cl}}^{p_{\ell} }  x(k_\ell) + 
\sum_{\tau=0}^{p_{\ell} -1} 
A_{\text{cl}}^{p_{\ell} -\tau - 1}BK e(k_\ell+\tau).
\end{align}
By \eqref{eq:sigN_cond}, 
\begin{equation}
\label{eq:initial_bound}
g(q_\ell , E_\ell , 0) = \|K\|_{\infty} \frac{E_\ell}{N} 
\leq \sigma E_\ell .
\end{equation}
Under the self-triggering mechanism~\eqref{eq:STM},
we obtain
\begin{equation}
\label{eq:g_bound}
g(q_\ell , E_\ell , \tau) \leq  \sigma E_\ell 
\end{equation}
for all $1 \leq \tau < p_{\ell} $. 
By definition, $E_\ell = \widetilde E_{\ell}$ for all $\ell \in \mathbb{N}$ and
$ E_\ell \leq \Gamma E_{\ell} =
\widetilde E_{\ell}$ for $\ell = 0$.
Since $\|x(k_{\ell})\|_{\infty} \leq E_{\ell}$
by assumption, 
Lemma~\ref{lem:inpuk_error} 
in the combination with the inequalities \eqref{eq:initial_bound} 
and \eqref{eq:g_bound} yields
\[
\|Ke(k_\ell+\tau)\|_{\infty} \leq g(q_\ell , E_\ell , \tau) \leq \sigma  E_\ell
\leq \sigma \widetilde E_{\ell}
\]
for all $0 \leq \tau < p_{\ell} $.
Applying the properties \eqref{eq:Acl_norm_prop1} and \eqref{eq:Acl_norm_prop2}
to \eqref{eq:state_dynamics_kell1},
we obtain
\begin{align}
&\|x(k_{\ell+1})\|_{\text{cl}} \notag \\
&\quad \leq 
\gamma^{p_{\ell} } \|x(k_\ell) \|_{\text{cl}}  +  \Gamma
\sum_{\tau=0}^{p_{\ell} -1} \gamma^{p_{\ell} -\tau-1} \|B\|_{\infty} \|Ke(k_\ell+\tau) \|_{\infty} 
\notag \\
&\quad \leq 
\gamma^{p_{\ell} } \widetilde 
E_\ell + \sigma  \Gamma \|B\|_{\infty} \sum_{\tau=0}^{p_{\ell} -1} \gamma^{\tau} 
\widetilde E_\ell \notag \\
&\quad \leq 
\big(\gamma^{p_{\ell} } (1- \delta \sigma) + \delta \sigma \big) \widetilde E_\ell = \widetilde E_{\ell+1}.
\label{eq:Rxktau_bound}
\end{align}
Thus, \eqref{eq:xR_bound} holds for $\ell +1$.
\hspace*{\fill} $\blacksquare$

\subsection{Sufficient condition for exponential convergence}
The following theorem gives a sufficient condition for
the closed-loop system
to achieve exponential convergence. 
\begin{theorem}
	\label{thm:main}
	{\em
	Suppose that  Assumptions~\ref{assump:stability} and
	\ref{assump:initial_bound} hold.
	Construct the components $g$ 
	and $\{E_\ell\}_{\ell \in \mathbb{N}_0}$ of the encoding and 
	self-triggering strategy
	by \eqref{eq:g_def}  and \eqref{eq:E_def}, respectively.
	If the number $N \in \mathbb{N}$ of 
	quantization levels  and
	the thereshold parameter $\sigma>0$ satisfy
	\begin{equation}
	\label{eq:sigma_cond}
	\frac{\|K\|_{\infty}}{N} \leq 
	\sigma < \frac{1}{\delta} = \frac{1-\gamma }{\Gamma \|B\|_{\infty}},
	\end{equation}
	then the system \eqref{eq:system} 
	with the encoding and 
	self-triggering strategy described in Sections~\ref{sec:EDS} and \ref{sec:STM}
	 achieves 
	exponential	convergence. Moreover, the constant $\omega$ given by
	\begin{equation}
	\label{eq:omega_def}
	\omega := 
	\big(\gamma^{\tau_{\max}} 
	(1- \delta \sigma ) +\delta \sigma \big)^{1/\tau_{\max}}
	\end{equation}
	satisfies
	\eqref{eq:exp_conv} for some $\Omega \geq 1$.
}
\end{theorem}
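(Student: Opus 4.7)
The plan is to combine Lemma~\ref{lem:state_bound} with the contraction norm $\|\cdot\|_{\text{cl}}$ from Lemma~\ref{lem:contraction} and the recursion \eqref{eq:E_def} to produce an exponential bound on $\|x(k)\|_{\text{cl}}$ valid at \emph{every} $k \in \mathbb{N}_0$, not only at the sampling instants $k_\ell$. The hypothesis $\|K\|_{\infty}/N \leq \sigma$ activates Lemma~\ref{lem:state_bound}, while $\sigma < 1/\delta$ forces $\delta\sigma < 1$ and hence $\omega \in (0,1)$ via \eqref{eq:omega_def}.

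First I would derive an inter-sample version of \eqref{eq:Rxktau_bound}. Fix $\ell \in \mathbb{N}_0$ and $k_\ell \leq k \leq k_{\ell+1}$. Rerunning the calculation \eqref{eq:state_dynamics_kell1}--\eqref{eq:Rxktau_bound} with $k_{\ell+1}$ replaced by $k$, and reusing the triggering bound $\|Ke(k_\ell+\tau)\|_{\infty} \leq \sigma \widetilde E_\ell$ that was already established inside the proof of Lemma~\ref{lem:state_bound}, one obtains $\|x(k)\|_{\text{cl}} \leq \alpha(k-k_\ell)\,\widetilde E_\ell$, where $\alpha(p) := \gamma^{p}(1-\delta\sigma) + \delta\sigma$. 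Since \eqref{eq:E_def} reads $\widetilde E_{\ell+1} = \alpha(p_\ell)\,\widetilde E_\ell$ with $p_\ell := k_{\ell+1}-k_\ell \leq \tau_{\max}$ and $\widetilde E_0 = \Gamma E_0$, the theorem reduces to the single scalar inequality
\[
\alpha(p) \leq \omega^{p} \qquad \text{for all } 0 \leq p \leq \tau_{\max}.
\]
Granted this, induction on $\ell$ yields $\widetilde E_\ell \leq \Gamma E_0\,\omega^{k_\ell}$; the inter-sample bound then gives $\|x(k)\|_{\text{cl}} \leq \omega^{k-k_\ell}\,\widetilde E_\ell \leq \Gamma E_0\,\omega^k$; and \eqref{eq:Acl_norm_prop1} converts this to $\|x(k)\|_{\infty} \leq \Gamma E_0\,\omega^k$, which is \eqref{eq:exp_conv} with $\Omega := \Gamma$.

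The main technical step is thus the scalar inequality $\alpha(p) \leq \omega^{p}$. Setting $\beta := \delta\sigma \in (0,1)$ and $\lambda := p/\tau_{\max} \in [0,1]$, writing $\gamma^p = (\gamma^{\tau_{\max}})^{\lambda}$, and using the definition $\omega^{\tau_{\max}} = (1-\beta)\gamma^{\tau_{\max}} + \beta$, the claim becomes
\[
(1-\beta)\bigl(\gamma^{\tau_{\max}}\bigr)^{\lambda} + \beta \cdot 1^{\lambda} \leq \bigl((1-\beta)\gamma^{\tau_{\max}} + \beta \cdot 1\bigr)^{\lambda},
\]
which is precisely Jensen's inequality for the concave map $t \mapsto t^{\lambda}$ on $(0,\infty)$ applied to the two points $\gamma^{\tau_{\max}}$ and $1$ with weights $1-\beta$ and $\beta$. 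This concavity observation is the only nonroutine ingredient beyond Lemmas~\ref{lem:state_bound} and \ref{lem:contraction}; morally, it says that the per-step decay rate $\alpha(p)^{1/p}$ is maximized at $p = \tau_{\max}$, so the worst-case sampling schedule drives the convergence rate $\omega$ in \eqref{eq:omega_def}. I do not anticipate any obstacle beyond noticing this convexity structure; the surrounding induction and the passage between $\|\cdot\|_{\text{cl}}$ and $\|\cdot\|_{\infty}$ are direct.
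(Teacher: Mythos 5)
Your proof is correct, and it diverges from the paper's argument in two substantive ways. First, where you handle the inter-sample behavior by rerunning the contraction-norm estimate \eqref{eq:state_dynamics_kell1}--\eqref{eq:Rxktau_bound} up to an arbitrary $k$ with $k_\ell \leq k \leq k_{\ell+1}$, obtaining the sharp bound $\|x(k)\|_{\text{cl}} \leq \big(\gamma^{k-k_\ell}(1-\delta\sigma)+\delta\sigma\big)\widetilde E_\ell$, the paper uses a much cruder device: it writes $x(k_\ell+\tau) = A^\tau x(k_\ell) + \sum_{p=0}^{\tau-1}A^{\tau-p-1}BKq_\ell$, invokes $\|x(k_\ell)\|_\infty \leq E_\ell$ and $\|q_\ell\|_\infty \leq E_\ell$ together with the finiteness of $\tau_{\max}$ to produce an unspecified constant $M$ with $\|x(k_\ell+\tau)\|_\infty \leq ME_\ell$, and absorbs the loss into $\Omega = \omega^{-\tau_{\max}}M\Gamma$. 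Your route yields the explicit and smaller constant $\Omega = \Gamma$, at the modest cost of re-deriving the estimate from Lemma~\ref{lem:state_bound} on a sub-interval (which is legitimate, since the triggering bound $\|Ke(k_\ell+\tau)\|_\infty \leq \sigma\widetilde E_\ell$ holds for all $0 \leq \tau < k_{\ell+1}-k_\ell$ and a fortiori on the truncated range). Second, the key scalar fact that $\tau_{\max}$ maximizes $\phi(\tau) = (\gamma^\tau(1-\delta\sigma)+\delta\sigma)^{1/\tau}$ is proved in the paper by a calculus argument: differentiating $\Phi = \log\phi$ twice, reducing to the sign of $\nu\nu''-(\nu')^2 = \gamma^\tau\delta\sigma(1-\delta\sigma)(\log\gamma)^2 > 0$, and integrating back from $\psi(0)=0$. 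Your Jensen's-inequality observation --- that $\alpha(p) \leq \omega^p$ is concavity of $t\mapsto t^{p/\tau_{\max}}$ applied to the convex combination of $\gamma^{\tau_{\max}}$ and $1$ with weights $1-\delta\sigma$ and $\delta\sigma$ --- replaces that computation with one line and is exactly what the theorem needs (the paper's calculus proof gives the slightly stronger statement that $\phi$ is strictly increasing on all of $\mathbb{R}_+$, which is only used informally in the discussion of the numerical example). Both approaches are sound; yours is shorter and more quantitative.
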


Before proceeding to the proof of this theorem, 
we  provide some remarks on the obtained sufficient 
condition \eqref{eq:sigma_cond}.
The condition $\|K\|_{\infty} /N \leq \sigma$ 
is used to avoid $g(q_{\ell},E_{\ell},0) >
\sigma E_{\ell}$ for $\ell \in \mathbb{N}_0$ as shown in the proof of Lemma~\ref{lem:state_bound}; see \eqref{eq:initial_bound}. 
Without this condition, 
the input error due to quantization may be larger than
the threshold $\sigma E_{\ell}$ even at the sampling time $k=k_{\ell}$.
On the other hand, the condition $\sigma < 1/\delta$ is used to guarantee
exponential convergence. In fact, we see from
the definition \eqref{eq:E_def} of $\{E_\ell\}_{\ell \in \mathbb{N}_0}$ that
the condition $\sigma < 1/\delta$ is satisfied
if and only if $\{E_\ell\}_{\ell \in \mathbb{N}_0}$ is 
a decreasing sequence. Combing this fact with the
bound of the state obtained in \eqref{eq:x_bound},
we prove that the closed-loop system achieves
exponential convergence.

\noindent\hspace{1em}{\itshape {\it Proof of Theorem~\ref{thm:main}:} }
	Since  $\delta \sigma < 1$ by \eqref{eq:sigma_cond}, 
	it follows that
	\[
	\gamma^\tau (1-\delta \sigma) + \delta \sigma 
	< \gamma^0 (1-\delta \sigma) + \delta \sigma =
	1\qquad \forall \tau \in \mathbb{N}.
	\]
	Define 
	\[
	\omega := 
	\max_{1\leq \tau \leq \tau_{\max}}\big(\gamma^\tau (1-\delta \sigma) +\delta \sigma\big)^{1/\tau} < 1.
	\]
	By the definition \eqref{eq:E_def} of $\{E_\ell\}_{\ell \in \mathbb{N}_0}$,
	
	\[
	\widetilde E_{\ell+1} \leq  \widetilde E_\ell \omega^{k_{\ell+1}-k_\ell}
	\qquad \forall \ell \in \mathbb{N}_0.
	\]
	Therefore
	\begin{equation}
	\label{eq:wide_E_conv}
	E_{\ell} \leq
	\widetilde E_\ell \leq \widetilde E_0\omega^{k_\ell} = 
	\Gamma E_{0}\omega^{k_\ell} \qquad \forall \ell \in \mathbb{N}_0.
	\end{equation}
	For all $k_{\ell} \leq k < k_{\ell+1}$
	and  $\ell \in \mathbb{N}_0$, 
	\[
	x(k) = A^{k-k_\ell} x(k_\ell) + \sum_{\tau=0}^{k-k_{\ell}-1} A^{\tau}BK q_\ell.
	\]
	Lemma~\ref{lem:state_bound} gives
	\[
	\|x(k_{\ell})\|_{\infty} \leq E_{\ell}\qquad \forall \ell \in \mathbb{N}_0,
	\]
	and by construction, the quantized value $q_{\ell}$ also satisfies
	\[
	\|q_{\ell}\|_{\infty} \leq E_{\ell}\qquad \forall \ell \in \mathbb{N}_0.
	\] 
	Therefore,
	there exists $M\geq 1$ such that for all $k_{\ell} \leq k < k_{\ell+1}$
	and  $\ell \in \mathbb{N}_0$, 
	\[
	\|x(k)\|_{\infty} \leq M E_\ell.
	\]
	Combining this with \eqref{eq:wide_E_conv}, we obtain
	\begin{align*}
	\|x(k)\|_{\infty} 
	&\leq 
	( M \Gamma )
	E_{0} \omega^{k_\ell} 
	\\
	&\leq \big(\omega^{-\tau_{\max}}  M \Gamma \big)
	E_0 \omega^{k}
	\end{align*}
	for all $k_\ell \leq k < k_{\ell+1}$ and
	$\ell\in \mathbb{N}_0$. Thus,
	the system \eqref{eq:system} achieves 
	exponential	convergence.
	
	It remains to show that 
	\begin{equation}
	\label{eq:tau_max}
	\tau_{\max} = 
	\argmax_{1\leq \tau \leq \tau_{\max}}\big(\gamma^\tau (1-\delta \sigma) +\delta \sigma\big)^{1/\tau}.
	\end{equation}
	This fact was used in Theorem~5.8 of \cite{Wakaiki2018_EVC} without proof. 
	Here we give all details for the sake of completeness.
	
	We prove that 
	\[
	\phi(\tau) := \big(\gamma^\tau (1-\delta \sigma) +\delta \sigma\big)^{1/\tau}
	\]
	 is strictly increasing  on $[1,\infty)$.
	It suffices to show that 
	\[
	\Phi(\tau) := \log \phi (\tau) = \frac{\log( 
		\gamma^\tau (1-\delta \sigma) +\delta \sigma )}{\tau}
	\] 
	satisfies $\Phi'(\tau) >0$ for every 
	$\tau >0$.
	Define 
	\[
	\nu(\tau) := \gamma^\tau (1-\delta \sigma) +\delta \sigma.
	\]
	Since
	\[
	\Phi'(\tau) = \frac{\frac{\nu'(\tau)}{\nu(\tau)} \tau - \log \nu(\tau)}{\tau^2},
	\]
	it follows that $\Phi'(\tau) > 0$ if and only if
	\[
	\psi(\tau) := \frac{\nu'(\tau)}{\nu(\tau)} \tau - \log \nu(\tau) >0.
	\]
	We have that 
	\[
	\psi'(\tau) = \frac{\tau \big(\nu(\tau)\nu''(\tau) - \nu'(\tau)^2\big)}{\nu(\tau)^2}
	\]
	for all $\tau > 0$. Therefore,
	$\psi'(\tau) >0$ if and only if 
	\[\
	\nu(\tau)\nu''(\tau) - \nu'(\tau)^2 >0.
	\]
	Since
	\begin{align*}
	\nu'(\tau) &= \gamma^\tau(1-\delta \sigma)  \log \gamma \\
	\nu''(\tau) &= \gamma^\tau (1-\delta \sigma)  (\log \gamma)^2,
	\end{align*}
	it follows from $0 < \delta \sigma < 1$ that 
	\[
	\nu(\tau)\nu''(\tau) - \nu'(\tau)^2 = \gamma^{\tau} \delta \sigma (1-\delta \sigma)
	(\log \gamma)^2 >0
	\]
	for all $\tau >0$.
	Therefore, $\psi'(\tau) >0$. Since $\psi(0) = 0$, we obtain
	$\psi(\tau) >0$ and hence
	$\Phi'(\tau) >0$ for all $\tau >0$. Thus, \eqref{eq:tau_max} holds.
\hspace*{\fill} $\blacksquare$

\subsection{Design of encoding and self-triggering strategy}
Based on Theorem~\ref{thm:main}, we design 
an encoding and self-triggering strategy for
stabilization. Before doing so, we explain how to compute 
constants $\Gamma \geq 1$ and 
$\gamma \in (0,1)$ satisfying \eqref{eq:Acl_bound}.
First, we set a constant $\gamma \in (0,1)$. Next,
we numerically compute
a constant $\Gamma \geq 1$ 
corresponding to $\gamma$ as
\begin{equation}
\label{eq:Gamma_def}
\Gamma = \sup_{k \in \mathbb{N}_0} \big\|\gamma^{-k} A_\text{cl}^k \big\|_{\infty}.
\end{equation}
Let $\varrho(A_{\text{cl}})$ be the spectral radius of $A_{\text{cl}}$.
Every $\gamma >\varrho(A_{\text{cl}})$ satisfies \eqref{eq:Acl_bound}
for some $\Gamma \geq 1$.
On the other hand, if $\gamma <\varrho(A_{\text{cl}})$,
there does not exist a constant $\Gamma \geq 0$ such that \eqref{eq:Acl_bound}
holds. Note that 
a smaller $\gamma$ does not always allow a larger threshold parameter 
$\sigma$, because the constant $\Gamma$  given by \eqref{eq:Gamma_def} becomes larger as 
$\gamma$ decreases.

We summarize the proposed joint design of
an encoding scheme and a self-triggering mechanism
for exponential convergence.

\vspace{8pt} 
\noindent
\underline{\em{Encoding and self-triggering strategy}}
\begin{itemize}
\setlength{\leftskip}{18pt}
\item[Step~0.]
Take 
an upper bound $\tau_{\max}$ of inter-sampling times  and
a decay parameter $\gamma \in (\varrho(A_{\text{cl}}), 1)$.
Set $\Gamma := \sup_{k \in \mathbb{N}_0} \|\gamma^{-k} A_\text{cl}^k \|_{\infty}$, and choose a number $N \in \mathbb{N}$ of 
quantization levels  and
a threshold parameter $\sigma>0$
so that 
\begin{equation}
\label{eq:sigma_cond2}
\frac{\|K\|_{\infty}}{N} \leq  
\sigma < \frac{1-\gamma}{\Gamma  \|B\|_{\infty}}.
\end{equation}
\end{itemize}
At each sampling time $k_{\ell}$, 
the following information flow and computation occur.
\begin{itemize}
	\setlength{\leftskip}{18pt}
	\item[Step~1.] 
	The encoders generate
	the indices corresponding to the state $x(k_{\ell})$ 
	by  the scheme described in 
	Section~\ref{sec:EDS} and then transmit them
	to the self-triggering mechanism and the feedback gain.
	At both components, the indices are decoded to
	the quantized value $q_{\ell}$ of $x(k_{\ell})$.
\item[Step~2.]  The inter-sampling time 
$k_{\ell+1} - k_{\ell} \in \{1,\dots,\tau_{\max }\}$ is computed by 
the self-triggering mechanism~\eqref{eq:STM}, where the function 
$g$ is given by \eqref{eq:g_def}, and then  is sent to the sensors, 
the encoders, and the decoders.
\item[Step~3.] 
The encoders at the sensors 
calculate the next state bound $E_{\ell+1}$ by
the update rule \eqref{eq:E_def}.
The decoders at the self-triggering mechanism and the feedback gain 
also perform the same calculation.
\end{itemize}

\vspace{4pt}

We make some comments on the above strategy. First,
in Step~2, the inter-sample time $k_{\ell+1} - k_{\ell}$ is transmitted to
the encoders and the decoders. This is because they also utilize
inter-sampling times in Step~3 for the computation of the next state bound $E_{\ell+1}$. Second,
the distributed architecture described in Section~\ref{sec:EDS} and
the update rule \eqref{eq:E_def} of $\{E_{\ell}\}_{\ell \in \mathbb{N}_0}$
allow each sensor to encode its own measurements without using the measurements
of the other sensors. Hence,
the proposed strategy 
can be applied to the system whose
sensors  are spatially distributed.

We immediately see that the condition \eqref{eq:sigma_cond2}
holds for every sufficiently large number $N \in \mathbb{N}$ 
of quantization levels and
every sufficiently small threshold parameter $\sigma > 0$.
In other words, the
closed-loop system achieves exponential convergence under 
sufficiently fine quantization and fast self-triggered sampling.
Whether exponential convergence is achieved does not depend on
the upper bound $\tau_{\max}$ of inter-sampling times, but
the upper bound $\omega$ of the decay rate of the state given in \eqref{eq:omega_def}
becomes smaller as $\tau_{\max}$ increases.
	Note that $\omega$ depends on  $\sigma$
but not on $N$.
Fine quantization reduces the number of data transmissions in
the proposed encoding and self-triggering strategy, but
$\omega$ is determined only by the parameters of self-triggered sampling.

\begin{remark}
	Proposition~3.13 of \cite{Wakaiki2020DoS} provides
	another method to construct a norm with respect to which
	$A_{\text{cl}}$ is a strict contraction under Assumption~\ref{assump:stability}.
	In this method, an invertible matrix is a design parameter
	for the encoding and decoding scheme.
	Since a decay parameter 
	$\gamma$ is easier to tune than an invertible matrix,
	we here use Lemma~\ref{lem:contraction} for the construction
	of a new norm.
\end{remark}

\color{black}
\section{Numerical Example}
\label{sec:example}
We discretize the linearized model of the unstable batch reactor 
studied in \cite{Rosenbrock1972} with sampling period $h = 0.01$.
Then the matrices $A$ and $B$ in the state equation \eqref{eq:system} 
are given by
\begin{align*}
A &= \begin{bmatrix}
1.0142 &  -0.0018  &  0.0651  & -0.0546 \\
-0.0057  &  0.9582  & -0.0001  &  0.0067 \\
0.0103  &  0.0417  &  0.9363  &  0.0563 \\
0.0004  &  0.0417 &   0.0129  &  0.9797 
\end{bmatrix} \\
B &= 
10^{-2} \times
\begin{bmatrix}
 0.0005 & -0.1034 \\
5.5629  & 0.0002 \\
1.2511  &-3.0444 \\
1.2511 & -0.0205 
\end{bmatrix}.
\end{align*}
For this discretized system, we compute the
linear quadratic regulator whose state weighting matrix and
input weighting matrix are the diagonal 
matrices $I_4$ and $0.05\times I_2$, respectively.
The resulting feedback gain $K$ is given by
\[
K = 
\begin{bmatrix}
1.3565 & -3.3445 & -0.5501 & -3.8646 \\
5.8856  & -0.0462  &  4.5150 &  -2.4334
\end{bmatrix}.
\]
The closed-loop matrix $A_{\text{cl}} = A+BK$ is Schur stable,
and Assumption~\ref{assump:stability} is satisfied.
For the computation of time responses,
we take the initial state 
\[
x(0) = \begin{bmatrix}
-1 & -1 & - 1 & 1
\end{bmatrix}^{\top}.
\]
The initial state bound $E_0$ in Assumption~\ref{assump:initial_bound}
is set to $1.1$.

The spectral radius of the closed-loop matrix $A_{\text{cl}}$
is given by $\varrho(A_{\text{cl}}) = 0.9402$, and we set 
$\gamma = 1.01 \times \varrho(A_{\text{cl}})=
0.9496$. Then $\Gamma$ defined by \eqref{eq:Gamma_def} is 
$\Gamma = 2.6012$.
By Theorem~\ref{thm:main}, if the number $N$ of quantization levels  and 
the threshold parameter $\sigma$ satisfy
\begin{equation}
\label{eq:suf_cond}
\frac{12.8803}{N} \leq 
\sigma < 0.3482,
\end{equation}
then the closed-loop system achieves exponential convergence.
The parameters of the self-triggering mechanism
are given by $\sigma = 0.28$ and $\tau_{\max} = 20$,
and we consider two cases $N=61$ and $N=101$.
The condition \eqref{eq:suf_cond} is satisfied in 
both cases. In what follows, 
we compare the time responses between the cases 
 $N=61$ and $N=101$.
 
\begin{figure}[tb]
	\centering
	\includegraphics[width = 8cm]{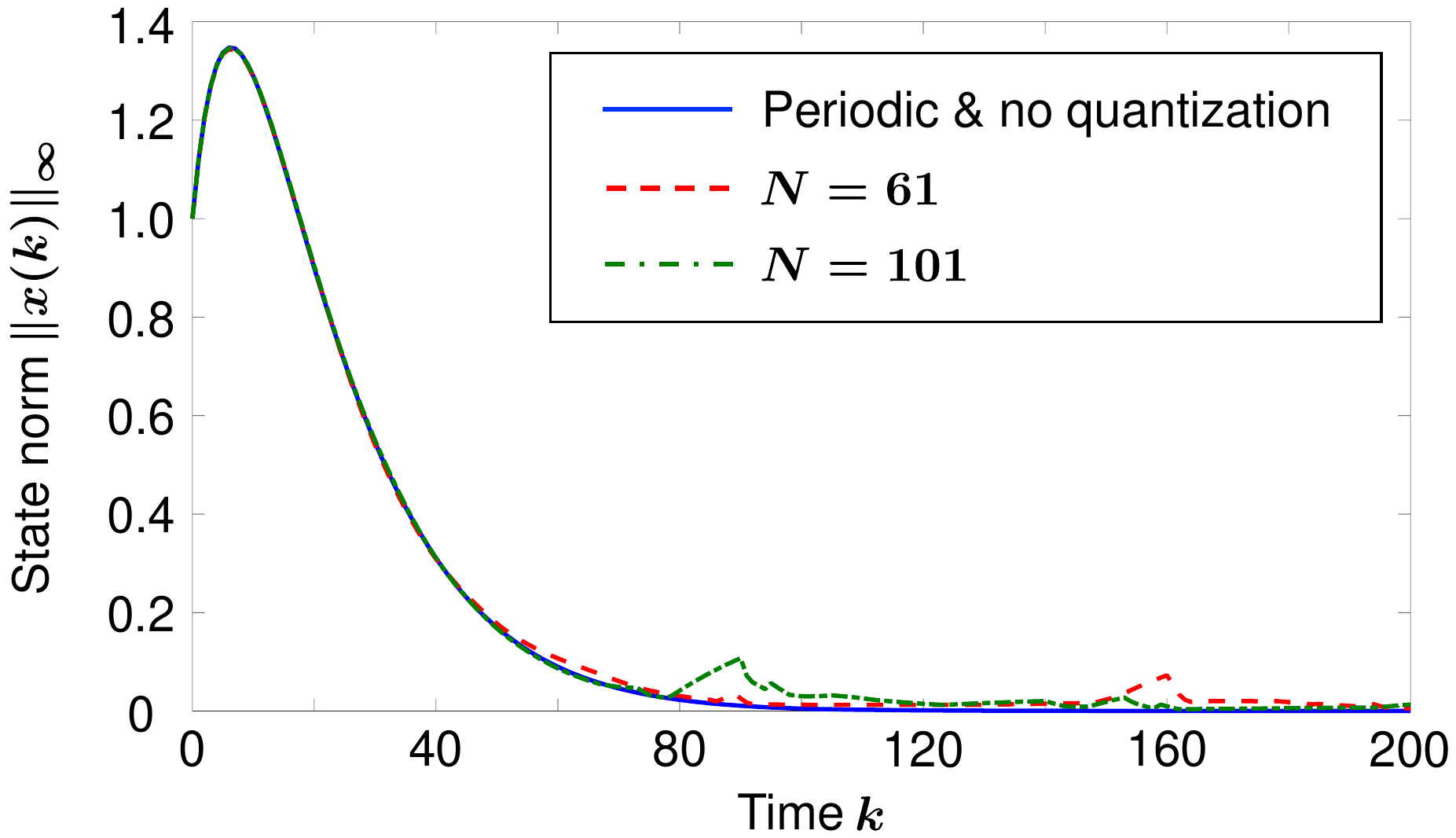}
	\caption{State norm $\|x(k)\|_{\infty}$.}
	\label{fig:state_norm}
\end{figure}
Fig.~\ref{fig:state_norm} shows the time responses of the state norm $\|x(k)\|_{\infty}$.
The blue solid line shows the ideal case where
the state $x(k)$ is transmitted at every $k \in \mathbb{N}_0$ 
without quantization.
The red dashed line and the green dotted line indicate
the cases $N=61$ and $N=101$, respectively.
We see from Fig.~\ref{fig:state_norm} that 
the state norm  converges to zero in both cases.
The convergence speeds have little difference
between
the cases $N=61$ and $N=101$, although 
quantization errors become smaller as $N$ increases.
This is because $N$ is related to
the number of data transmissions rather than  the convergence speed.

To see this, we plot the inter-sampling times $k_{\ell+1} - k_{\ell}$ 
for the cases 
$N=61$ and $N=101$ in Figs.~\ref{fig:ie_time_N61}
and \ref{fig:ie_time_N101}, respectively.
We see from these figures that 
the inter-sampling times in the case $N=101$ are
larger than those in the case $N=61$. 
In particular, the number of data transmissions for $k \geq 40$ 
is significantly reduced by the self-triggering mechanism 
in the case $N=101$.
The total numbers of data transmissions in the time-interval $[0,200]$ 
are 
$62$ for the case $N =61$ and $37$ for the case $N = 101$.
The amount of data per transmission in  the case $N = 101$
is $(101/61)^4  = 7.5156$ times larger than that in  the case $N=61$.
Hence the total amount of transmitted data
in the time-interval $[0,200]$ 
for the case $N=61$
is smaller than that for the case $N=101$. Note, however, that an important
benefit to be gained from fine quantization is that the sensors
can save energy and extend their lifetime, by reducing the number of 
sampling.

\begin{figure}[tb]
	\centering
	\subcaptionbox{Case $N=61$. \label{fig:ie_time_N61}}
	{\includegraphics[width = 8cm,clip]{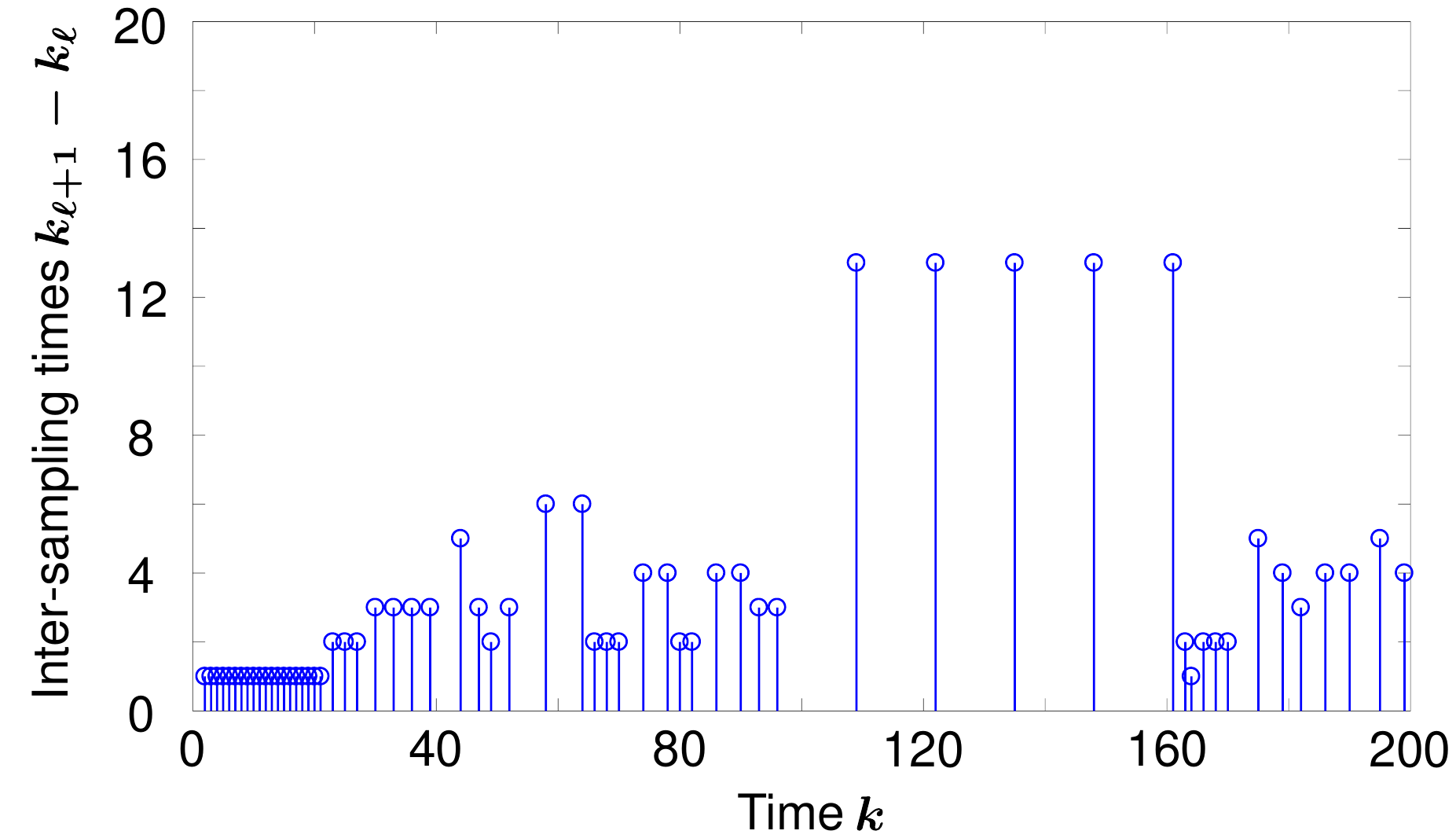}} \vspace{12pt}\\
	\subcaptionbox{Case $N=101$. \label{fig:ie_time_N101}}
	{\includegraphics[width = 8cm,clip]{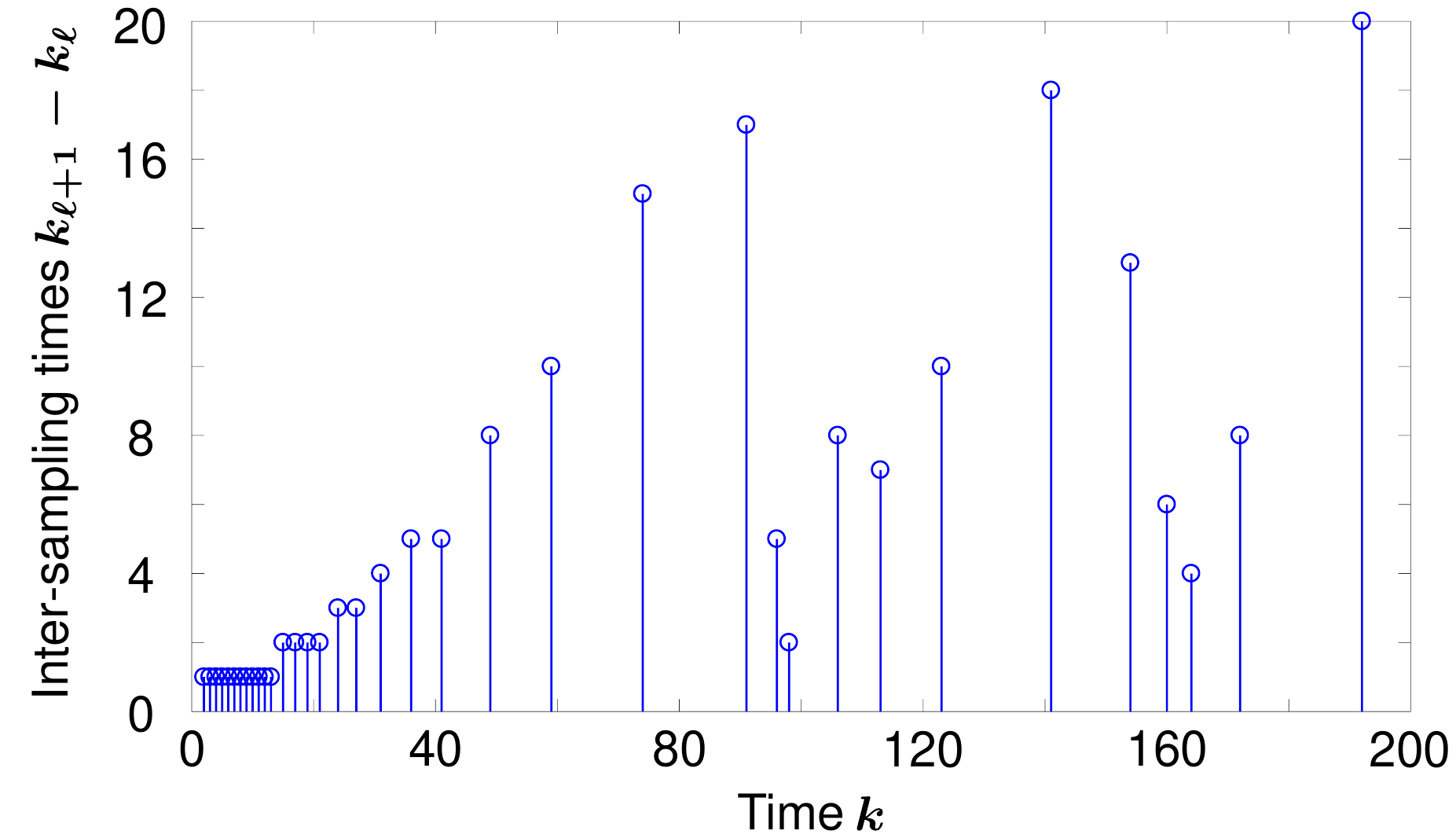}}
	\caption{Inter-sampling times $k_{\ell+1} - k_{\ell}$.}
\end{figure}

Fig.~\ref{fig:state_bound} plots the 
sequence $\{E_{\ell}\}_{\ell \in \mathbb{N}_0}$ of state bounds
used for the encoding and decoding scheme.
The blue circles and the red squares 
indicate the cases $N=61$ and $N=101$, respectively.
In both  cases, $\{E_{\ell}\}_{\ell \in \mathbb{N}_0}$
converges to zero; see also \eqref{eq:wide_E_conv}.
We have shown in the proof of Theorem~\ref{thm:main}
that the decay rate 
\[
\big(\gamma^{k_{\ell+1}-k_\ell} (1- \delta \sigma
) + \delta \sigma\big)^{\frac{1}{k_{\ell+1}-k_\ell}}
\] 
in the update rule \eqref{eq:E_def}
becomes smaller as the inter-sampling time $k_{\ell+1}-k_\ell$ increases.
Since the inter-sampling times 
in the case $N=101$ are large compared with
those in the case $N=61$ as seen in Figs.~\ref{fig:ie_time_N61}
and \ref{fig:ie_time_N101},
the convergence speed of the red squires ($N=101$) is 
slightly slower than that of the blue circles ($N=61$) 
in Fig.~\ref{fig:state_bound}.
\begin{figure}[tb]
	\centering
	\includegraphics[width = 8cm]{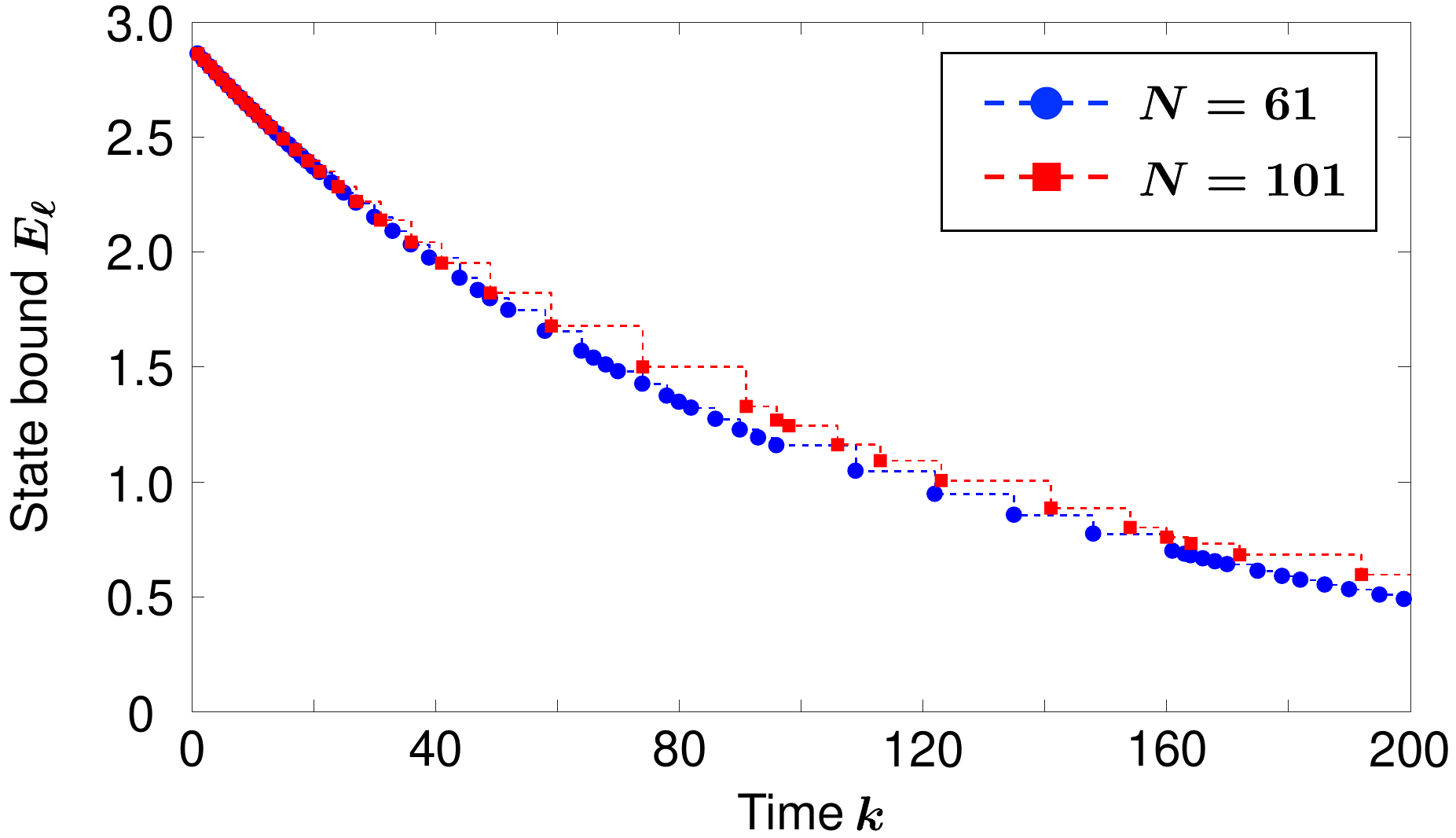}
	\caption{State bound $E_{\ell}$.}
	\label{fig:state_bound}
\end{figure}

Figs.~\ref{fig:x1_N61} and \ref{fig:x1_N101} show 
the time responses of the first element $x^{\langle 1 \rangle}(k) \in \mathbb{R}$
of the state $x(k)$ and 
its quantized value $q^{\langle 1 \rangle}_{\ell}$
in the cases $N=61$ and $N=101$, respectively.
The average values of the quantization errors are given by
$1.2523 \times 10^{-2}$ in the case $N=61$ and $9.6675 \times 10^{-3}$ in the case $N=101$.
As expected,
the quantization errors in the case $N=101$
are smaller on average than those in the case $N=61$. 
We see from Figs.~\ref{fig:x1_N61} and \ref{fig:x1_N101} that
the accurate information on the state due to 
fine quantization is utilized to reduce
the number of data transmissions.
\begin{figure}[tb]
	\centering
	\subcaptionbox{Case $N=61$. \label{fig:x1_N61}}
	{\includegraphics[width = 8cm,clip]{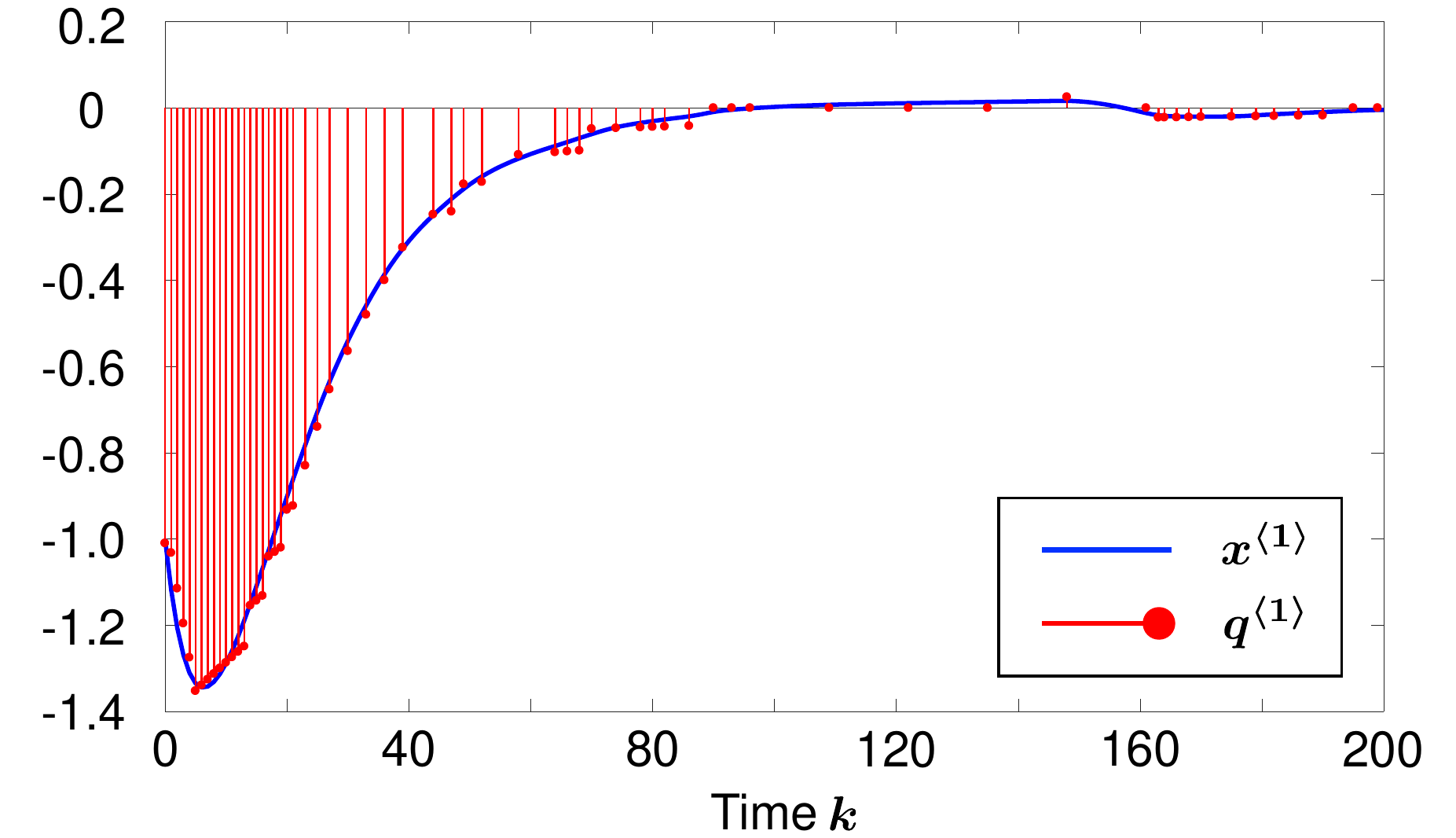}} \vspace{12pt}\\
	\subcaptionbox{Case $N=101$. \label{fig:x1_N101}}
	{\includegraphics[width = 8cm,clip]{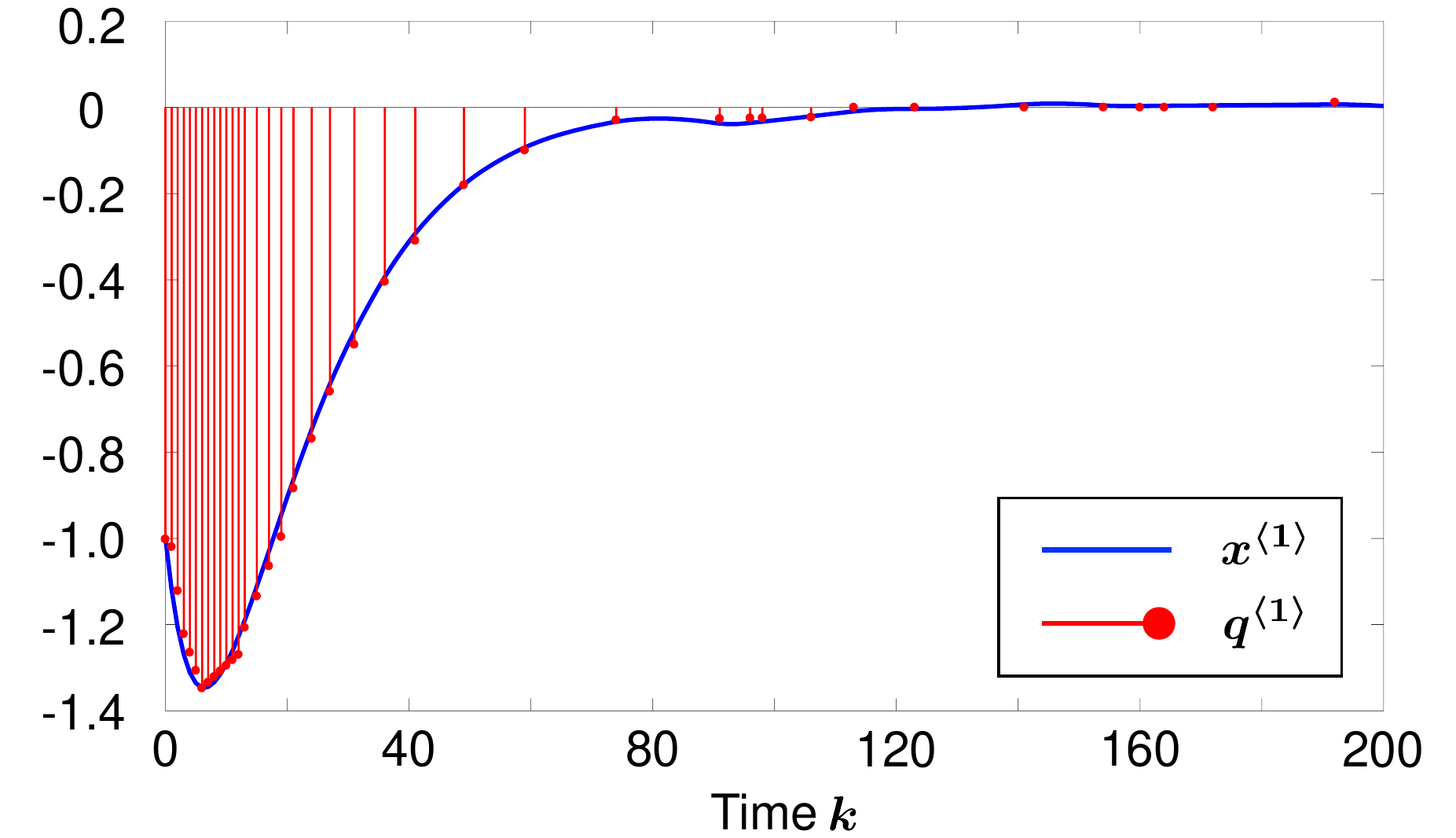}}
	\caption{State $x^{\langle 1 \rangle }$ and quantized value 
		$q^{\langle 1 \rangle }$.}
\end{figure}

\section{Conclusion}
\label{sec:conclusion}
We have developed a joint strategy of encoding and self-triggered sampling
for the stabilization of discrete-time linear systems.
The encoding method is distributed in the sense that 
an individual sensor encodes its measurements without knowing
measurement data of other sensors. 
To compute sampling times,
the centralized self-triggering mechanism 
integrates quantized measurement data sent from possibly spatially
distributed sensors
and then
estimates 
input errors due to quantization and self-triggered sampling.
We have provided a sufficient condition for the stabilization of
the quantized self-triggered control system.
This sufficient condition is described by inequalities 
on the number of quantization levels and the threshold parameter
of the self-triggering mechanism.
Future work involves extending the proposed method 
to output feedback stabilization in the presence of disturbances and
guaranteed cost control.

\section*{Appendix}
\subsection*{Proof of Lemma~\ref{lem:contraction}}
First we show that the map $\vertiii {\cdot}$ is a norm on $\mathbb{R}^n$.
Since 
\begin{equation}
\label{eq:new_norm_k_bound}
\big\|\gamma^{-k} F^k \xi \big\|_{\infty}  \leq 
\gamma^{-k} \Gamma \gamma^k \|\xi \|_{\infty}  \leq \Gamma \|\xi\|_{\infty}
\end{equation}
for all $k \in \mathbb{N}_0$ and  $\xi \in \mathbb{R}^n$,
it follows that $\vertiii {\xi} < \infty$ for all $\xi \in \mathbb{R}^n$.

By definition, $\vertiii {\xi} \geq 0$ for every 
$\xi \in \mathbb{R}^n$ and $\vertiii {0} = 0$.
Since
\begin{equation}
\label{eq:xnorm_relation}
\|\xi\|_{\infty} = 
\big\|\gamma^{-0}F^0 \xi\big\|_{\infty}  \leq \sup_{k \in \mathbb{N}_0} \big\|\gamma^{-k}F^k \xi\big\| = \vertiii {\xi}
\end{equation}
for every $\xi \in \mathbb{R}^n$, 
it follows that 
$\vertiii {\xi} = 0$ implies $\xi = 0$.
For all $a\in \mathbb{R}$ and  $\xi \in \mathbb{R}^n$,
\begin{align*}
\vertiii {a\xi} &=  
\sup_{k \in \mathbb{N}_0} \big\|\gamma^{-k}F^k (a\xi)\big\|_{\infty} \\
&= |a| 
\sup_{k \in \mathbb{N}_0} \big\|\gamma^{-k}F^k \xi\big\|_{\infty} \\
&= 
|a| ~\! \vertiii {\xi} .
\end{align*}
For every $\xi,\zeta \in \mathbb{R}^n$,
\begin{align*}
\vertiii {\xi+\zeta} &=
\sup_{k \in \mathbb{N}_0} \big\|\gamma^{-k}F^k (\xi+\zeta)\big\|_{\infty} \\
&\leq
\sup_{k \in \mathbb{N}_0} \big( \big\|\gamma^{-k}F^k \xi\big\|_{\infty} + \big\|\gamma^{-k}F^k \zeta\big\|_{\infty} \big) \\
&\leq
\sup_{k \in \mathbb{N}_0} \big\|\gamma^{-k}F^k \xi\big\|_{\infty}+ \sup_{k \in \mathbb{N}_0} \big\|\gamma^{-k}F^k \zeta\big\|_{\infty} \\
&= 
\vertiii {\xi} + \vertiii {\zeta} .
\end{align*}
Thus, $\vertiii {\cdot } $ is a norm on $\mathbb{R}^n$.

Next we prove that the norm $\vertiii {\cdot } $
has the properties \eqref{eq:prop1} and \eqref{eq:prop2}.
Take $\xi \in \mathbb{R}^n$.
We have already shown in \eqref{eq:xnorm_relation} that 
$\|\xi\|_{\infty} \leq \vertiii {\xi}$.
On the other hand, \eqref{eq:new_norm_k_bound} yields
\[
\vertiii {\xi} = \sup_{k \in \mathbb{N}_0} \big\|\gamma^{-k}F^k \xi\big\|  \leq \Gamma 
\|\xi\|_{\infty}.
\]
Therefore, \eqref{eq:prop1} holds.
The remaining assertion \eqref{eq:prop2} follows by
\begin{align*}
\bigvertiii {F^k \xi} &= 
\sup_{\ell \in \mathbb{N}_0} \big\|\gamma^{-\ell}F^{k+\ell} \xi\big\| \\
&= 
\gamma^k \sup_{\ell \in \mathbb{N}_0} \big\|\gamma^{-(k+\ell)}F^{k+\ell} \xi\big\| \\
&\leq 
\gamma^k \vertiii {\xi}\qquad \forall k \in \mathbb{N}_0.
\end{align*}
This completes the proof.
\hspace*{\fill} $\blacksquare$


\end{document}